\newtheorem{theorem}{Theorem}[section]
\newtheorem{proposition}[theorem]{Proposition}
\newtheorem{lemma}[theorem]{Lemma}
\newtheorem*{conjecture}{Conjecture}
\theoremstyle{definition}
\theoremstyle{remark}
\newtheorem{remark}{Remark}[section]
\newcommand{\z}[1]{\frac\partial{\partial z_{#1}}}
\newcommand{\dz}[1]{dz_{#1}}
\newcommand{\dzb}[1]{d\overline z_{#1}}
\renewcommand{\epsilon}{\varepsilon}
\renewcommand{\tilde}{\widetilde}
\renewcommand{\bar}{\overline}
\newcommand{\x}[1]{\frac\partial{\partial \xi_{#1}}}
\newcommand{\rank}{\operatorname{rank}}
\newcommand{\tr}{\operatorname{tr}}
\begin{document}
\bibliographystyle{siam}
\title{Smooth metrics on jet bundles and applications}
\author{Simone Diverio}
\address{Simone Diverio --- Istituto \lq\lq Guido Castelnuovo\rq\rq{} \\ SAPIENZA Universit\`a di Roma \\ Piazzale Aldo Moro 2 \\ 00185 Roma.}
\email{diverio@mat.uniroma1.it} 

\subjclass[2000]{Primary 53B35; Secondary 14J29}

\begin{abstract}
Following a suggestion made by J.-P. Demailly, for each $k\ge 1$, we endow, by an induction process, the $k$-th (anti)tautological line bundle $\mathcal O_{X_k}(1)$ of an arbitrary complex directed manifold $(X,V)$ with a natural smooth hermitian metric. Then, we compute recursively the Chern curvature form for this metric, and we show that it depends (asymptotically -- in a sense to be specified later) only on the curvature of $V$ and on the structure of the fibration $X_k\to~X$. When $X$ is a surface and $V=T_X$, we give explicit formulae to write down the above curvature as a product of matrices. As an application, we obtain a new proof of the existence of global invariant jet differentials vanishing on an ample divisor, for $X$ a minimal surface of general type whose Chern classes satisfy certain inequalities, without using a strong vanishing theorem \cite{Bog77} of Bogomolov.
\end{abstract}

\maketitle
\tableofcontents

\section{Introduction}\label{introduction}

In \cite{GG80}, Green and Griffiths showed, among other things, that if $X$ is an algebraic surface of general type, then there exist $m\gg k\gg 1$, such that $H^0(X,\mathcal J_{k,m}T^*X)\ne 0$, where $\mathcal J_{k,m}T^*X$ is the bundle of jet differentials of order $k$ and degree $m$.  
Their proof relies on an asymptotic computation of the Euler characteristic $\chi(\mathcal J_{k,m}T^*X)$ (which has been possible thanks to the full knowledge of the composition series of this bundle) together with a powerful vanishing theorem of Bogomolov \cite{Bog77}. 

More precisely, if $X$ is a $n$-dimensional smooth projective variety, and $\mathcal J_{k,m}T^*X\to X$ the bundle of jet differentials of order $k$ and weighted degree $m$, they get the following asymptotic estimate for the holomorphic Euler characteristic:
$$
\begin{aligned}
\chi(\mathcal J_{k,m}T^*_X)=& \frac{m^{(k+1)n-1}}{(k!)^n((k+1)n-1)!} \\
&\times\biggl(\frac{(-1)^n}{n!}c_1(X)^n(\log k)^n+O\bigl((\log k)^{n-1}\bigl)\biggr)+O\bigl(m^{(k+1)n-2}\bigr).
\end{aligned}
$$
In particular, if $X$ is a surface of general type, then the Bogomolov vanishing theorem applies and, having cancelled the $h^2$ term by Serre's duality, they get a positive lower bound for $h^0(X,\mathcal J_{k,m}T^*_X)$ when $m\gg k\gg 1$.

Nowadays, there are no general results about the existence of global \emph{invariant} jet differentials on a surface of general type neither, of course, for varieties of general type in arbitrary dimension.  

Nevertheless, thanks to a beautiful and relatively simple argument of Demailly \cite{Dem07}, their existence should potentially lead to solve the following celebrated conjecture.

\begin{conjecture}[Green and Griffiths \cite{GG80}, Lang]
Let $X$ be an algebraic variety of general type. Then there exist a proper algebraic sub-variety $Y\subsetneq X$ such that every non-constant holomorphic entire curve $f\colon\mathbb C\to X$, has image $f(\mathbb C)$ contained in $Y$.
\end{conjecture}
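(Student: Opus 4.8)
The plan is to run the jet-differential strategy already indicated in the introduction, which breaks the conjecture into an \emph{existence} step and a \emph{degeneracy} step that are established by completely different techniques. For the existence step, with $X$ of general type and $V=T_X$, I would look for integers $k,m\gg1$, an ample divisor $A$ on $X$, and a non-zero global section
\[
\sigma\in H^0\bigl(X_k,\mathcal O_{X_k}(m)\otimes\pi^*\mathcal O_X(-A)\bigr),\qquad\pi\colon X_k\to X,
\]
equivalently a non-zero global \emph{invariant} jet differential of order $k$ and weighted degree $m$ with coefficients vanishing along $A$ (recall $\pi_*\mathcal O_{X_k}(m)$ is exactly the bundle of such invariant jet differentials). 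This is the step for which the smooth metric on $\mathcal O_{X_k}(1)$ and its recursive Chern-curvature formula constructed above are intended: I would feed the asymptotic curvature --- which, as shown, depends only on the curvature of $V$ and on the fibration structure of $X_k\to X$ --- into Demailly's holomorphic Morse inequalities on $X_k$, bound $h^0\bigl(X_k,\mathcal O_{X_k}(m)\bigr)$ from below, subtract the cohomological cost of the anti-ample twist $\pi^*\mathcal O_X(-A)$, and verify that the leading term stays positive. For $\dim X=2$ with $X$ minimal of general type this is precisely the computation performed in the present paper under the stated Chern-class inequalities, which is the case the paper handles; in higher dimension one would redo the same positivity analysis with the metric built here.

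Granting such a section $\sigma$, the second step is Demailly's argument from \cite{Dem07}. A non-constant holomorphic map $f\colon\mathbb C\to X$ lifts canonically to $f_{[k]}\colon\mathbb C\to X_k$, and the fundamental vanishing theorem for invariant jet differentials --- a Schwarz-lemma and logarithmic-derivative estimate in which the anti-ampleness of $\pi^*\mathcal O_X(-A)$ is exactly what forces the conclusion --- yields $\sigma\bigl(f_{[k]}\bigr)\equiv0$. Applying this to every such section (over all orders, degrees and admissible twists) confines $f_{[k]}(\mathbb C)$ to the common base locus $Z_k\subsetneq X_k$, which is a proper subvariety precisely because the existence step produced a non-zero $\sigma$. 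One then takes $Y:=\overline{\pi(Z_k)}$.

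The hard part will be, unsurprisingly, the positivity and the base locus, and I expect essentially all the real difficulty to concentrate there while the degeneracy step is formal by comparison. On one side, the existence step in dimension $\ge3$ is still open: the recursive curvature formula becomes combinatorially heavy for $k\ge3$ and $n\ge3$, and to overcome an ample twist one has to control the error terms $O\bigl((\log k)^{n-1}\bigr)$ and $O\bigl(m^{(k+1)n-2}\bigr)$ appearing in the Green--Griffiths Euler-characteristic asymptotics, which requires fine estimates on the smooth metric itself and not merely a numerical bound. On the other side --- and this is the obstruction no current method resolves in general --- a single section $\sigma$ only forces $Z_k\subsetneq X_k$, not that $\pi(Z_k)$ be a proper subvariety of $X$: the locus $Z_k$ can perfectly well dominate $X$, and bringing it down to a proper $Y\subsetneq X$ needs additional geometry, namely a Riccati/foliation together with a diophantine-approximation argument of McQuillan type already for $\dim X=2$, and something strictly stronger in higher dimension.
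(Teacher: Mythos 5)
The statement you are addressing is labelled a \emph{conjecture} in the paper, and the paper offers no proof of it whatsoever: it is the Green--Griffiths--Lang conjecture, which remains open in general. The paper's actual contribution is much more modest --- the construction of a smooth metric on $\mathcal O_{X_k}(1)$, a recursive curvature formula, and, via Morse inequalities, the \emph{existence} of global invariant jet differentials on certain surfaces of general type (Theorem \ref{4.2} and the numerical corollaries in \S\ref{lb}). There is nothing in the paper to compare your argument against, since the paper never claims to settle the conjecture.

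That said, your proposal is an accurate summary of the standard two-step jet-differential strategy (existence of sections plus Demailly's fundamental vanishing theorem from \cite{Dem07}), and you are right that the present paper only supplies ingredients for the existence step in $\dim X=2$ under Chern-class hypotheses. More importantly, you correctly identify the two genuine obstructions that prevent this from being a proof: (i) the existence of sections of $\mathcal O_{X_k}(m)\otimes\pi^*\mathcal O_X(-A)$ in arbitrary dimension is open, and (ii) even with such sections, the base locus $Z_k\subsetneq X_k$ can dominate $X$, so one does not automatically obtain a proper $Y\subsetneq X$; this requires McQuillan-type arguments in dimension $2$ (and the paper indeed cites \cite{McQ98}) and has no known substitute in higher dimension. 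Since you explicitly flag both gaps rather than glossing over them, your write-up is not a flawed proof but an honest strategy outline --- which is the most one can say for this statement at present. If you wanted to turn this into something provable, you should target one of the paper's actual theorems (existence of jet differentials on surfaces under explicit Chern-class inequalities) rather than the conjecture itself.
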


A positive answer to this conjecture in dimension $2$ has been given by McQuillan in \cite{McQ98}, when the second Segre number $c_1(X)^2-c_2(X)$ of $X$ is positive (this hypothesis ensures the existence of an algebraic (multi)fo\-li\-ation on $X$, whose parabolic leaves are shown to be algebraically degenerate: this is the very deep and difficult part of the proof). 

\subsection{Main ideas and statement of the results}

Let $(X,V)$ be a complex directed manifold (for precise definitions see next section) with $\dim X=n$ and $2\le \rank V=r\le n$. Let $\omega$ be a hermitian metric on $V$. Such a metric naturally induces a smooth hermitian metric on the tautological line bundle $\mathcal O_{\tilde X}(-1)$ on the projectivized bundle of line of $V$. 

Now, the Chern curvature of its dual $\mathcal O_{\tilde X}(1)$, is a $(1,1)$-form on $\tilde X$ whose restriction to the fiber over a point $x\in X$ coincides with the Fubini-Study metric of $P(V_x)$ with respect to $\omega|_{V_x}$. Thus, it is positive in the fibers direction. Next, consider the pullback $\pi^*\omega$ on $\tilde X$: this is a $(1,1)$-form which is zero in the fibers direction and, of course, positive in the base direction.

If $X$ is compact so is $\tilde X$ and hence, for all $\varepsilon>0$ small enough, the restriction to $\tilde V$ of the $(1,1)$-form given by
$$
\pi^*\omega+\varepsilon^2\,\Theta(\mathcal O_{\tilde X}(1))
$$
gives rise to a hermitian metric on $\tilde V$. Moreover, this metric depends on two derivatives of the metric $\omega$. 

Of course, we can repeat this process for the compact directed manifold $(\tilde X,\tilde V)$, and by induction, for each $k\ge 1$ for the tower of projectivized bundles $(X_{k},V_k)$. \emph{A priori}, the hermitian metric we obtain in this fashion on $\mathcal O_{X_k}(-1)$, depends on $2k$ derivatives of the starting metric $\omega$ and on the choice of $\epsilon^{(k)}=(\varepsilon_1,\dots,\varepsilon_{k-1})$.

However, from a philosophical point of view, we would like to avoid the dependence on the last $2k-2$ derivatives of $\omega$, since the relevant geometrical data for $X$ lies in the first two derivatives of $\omega$, namely on its Chern curvature. Here comes Demailly's suggestion: as $\epsilon^{(k)}$ has to be small enough, it is quite natural to look for an asymptotic expression of the Chern curvature of the metric on $\mathcal O_{X_k}(-1)$ we have constructed, when $\epsilon^{(k)}$ tends to zero: this idea is developed in our first theorem.

\begin{theorem}\label{4.1}
The vector bundle $V_k$ can be endowed inductively with a smooth hermitian metric
$$
\omega^{(k)}=\left.\bigl(\pi_{k}^*\omega^{(k-1)}+\varepsilon_k^2\,\Theta(\mathcal O_{X_{k}}(1))\bigr)\right|_{V_k},
$$
where the metric on $\mathcal O_{X_k}(1)$ is induced by $\omega^{(k-1)}$, depending on $k-1$ positive real numbers $\varepsilon^{(k)}=(\varepsilon_1,\dots,\varepsilon_{k-1})$, such that the asymptotic of its Chern curvature with respect to this metric depends only on the curvature of $V$ and on the (universal) structure of the fibration $X_k\to X$, as $\varepsilon^{(k)}\to 0$.
\end{theorem}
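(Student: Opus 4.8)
The plan is to carry out the induction on $k$ explicitly, making the curvature computation at each step in a well-chosen local frame, and then to track which derivatives of the starting metric $\omega$ survive in the limit $\varepsilon^{(k)}\to 0$. The base case $k=1$ is essentially the content of the introductory discussion: given $\omega$ on $V$, the induced metric on $\mathcal O_{\tilde X}(-1)\subset\pi^*V$ is the restriction of $\pi^*\omega$, its Chern curvature restricted to a fiber $P(V_x)$ is the Fubini--Study form attached to $\omega|_{V_x}$, and the horizontal part involves exactly $\Theta(V)$; thus $\omega^{(1)}=\pi_1^*\omega + \varepsilon_1^2\,\Theta(\mathcal O_{X_1}(1))|_{V_1}$ is a genuine metric for $\varepsilon_1$ small by compactness, and its curvature already depends only on $\Theta(V)$ and the fibration data.

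For the inductive step, I would assume $\omega^{(k-1)}$ is a smooth metric on $V_{k-1}$ whose curvature $\Theta(V_{k-1})$, \emph{up to terms that are $O(\varepsilon^{(k-1)})$}, is a universal expression in $\Theta(V)$ and the structure of $X_{k-1}\to X$. Then $\omega^{(k-1)}$ induces a metric on $\mathcal O_{X_k}(-1)\hookrightarrow\pi_k^*V_{k-1}$ as before, and the formula for $\omega^{(k)}$ makes sense on the compact manifold $X_k$ for $\varepsilon_k$ small. The heart of the step is to compute $\Theta(\mathcal O_{X_k}(1))$ with respect to the induced metric: one writes a local frame of $V_{k-1}$ that is normal at a point with respect to $\omega^{(k-1)}$, so that the Fubini--Study-type fiber term appears cleanly and the remaining contribution is expressed through the curvature $\Theta(V_{k-1})$ pulled back to $X_k$. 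Substituting this into the definition of $\omega^{(k)}$ and of its own Chern connection, and then expanding as $\varepsilon_k\to 0$ (and using the inductive control on $\Theta(V_{k-1})$ as $\varepsilon^{(k-1)}\to 0$), one sees that all genuinely new derivatives of $\omega$ — i.e. those of order $>2$ — enter only through terms carrying a positive power of some $\varepsilon_i$, hence disappear in the limit, leaving a universal expression in $\Theta(V)$ and the tower structure. This closes the induction.

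The main obstacle, and the step I expect to require the most care, is bookkeeping the curvature of a restricted (sub)bundle metric: the Gauss--Griffiths formula relates $\Theta(\mathcal O_{X_k}(-1))$ to $\Theta(\pi_k^*V_{k-1})$ plus a second fundamental form term, and one must verify that the second fundamental form contributes only lower-order (in $\varepsilon$) corrections and that the higher-derivative dependence really is confined to those corrections. Equivalently, one must make precise the meaning of ``asymptotic'' — presumably that $\omega^{(k)}=\omega^{(k)}_{\mathrm{univ}}+O(\varepsilon^{(k)})$ in an appropriate $C^0$ (or $C^\infty$) norm on compact sets — and check that this notion is stable under the operations (pullback, restriction to a subbundle, taking Chern curvature) used at each inductive step. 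Once the correct statement is fixed, the remaining computations are the routine local-frame manipulations that I would not grind through here.
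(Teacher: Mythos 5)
Your overall strategy matches the paper's --- induction on $k$, normal local frames, tracking which derivatives of $\omega$ survive as $\varepsilon^{(k)}\to 0$ --- and the observation that higher-order derivatives of $\omega$ can only enter through terms carrying positive powers of the $\varepsilon_i$ is the correct guiding principle. The step you wave off as ``routine local-frame manipulations,'' however, contains the one idea that actually closes the recursion, so as written there is a gap. The inductive construction does \emph{not} hand you an $\omega^{(k)}$-normal frame of $V_k$: the natural frame $(f^{(k)}_\lambda)$ (the renormalized $\partial/\partial\xi^{(k)}_\lambda$'s together with a normalized lift of the tautological section) is $\omega^{(k)}$-unitary \emph{at the base point} but its Gram matrix has nonvanishing first derivatives there, so it cannot be fed directly into the next step's expansion of $|\eta_{k+1}|^2_{\omega^{(k)}}$. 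One must pass to a genuine normal frame $(e^{(k)}_\lambda)$ adapted to the next chosen direction $v_k$, and the curvature coefficients then transform by a unitary conjugation $c^{(k)}_{ij\lambda\mu}=\sum_{\alpha,\beta}\gamma^{(k)}_{ij\alpha\beta}\,\bar a^{(k)}_{\lambda\alpha}a^{(k)}_{\mu\beta}$, where $U_k=(a^{(k)}_{\lambda\mu})$ is the unitary matrix relating the two frames at the point and is determined by $v_k$. It is precisely this conjugation, combined with the explicit expression of $\gamma^{(k)}$ in terms of $c^{(k-1)}$, that produces a closed recursion bottoming out at $\Theta(V)$; and the matrices $U_k$ are exactly the ``fibration structure,'' encoding the flag $(x_0,[v_0],\dots,[v_{k-1}])\in X_k$. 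Asserting ``a universal expression in $\Theta(V)$'' without exhibiting this mechanism leaves the theorem undemonstrated.

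A smaller imprecision: the second fundamental form of $\mathcal O_{X_k}(-1)\subset\pi_k^*V_{k-1}$ is not a lower-order correction in $\varepsilon$ --- it is exactly the Fubini--Study term along the fiber, the leading contribution to $\Theta(\mathcal O_{X_k}(1))$ in the fiber direction. What you actually need is that, computed in a normal frame, this term has a universal form and carries no derivatives of $\omega^{(k-1)}$ beyond $\Theta(V_{k-1})$; which is again why the frame change above is indispensable, not routine.
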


As a byproduct of the proof of the above theorem, we also obtain induction formulae for an explicit expression of the curvature in terms of the curvature coefficients of $V$. These formulae, which are quite difficult to handle in higher dimension, are reasonably simple for $X$ a smooth surface: in this case, it turns out that the curvature coefficients of $\mathcal O_{X_k}(-1)$ are given by a sequence of products of $2\times 2$ real matrices. 

A general remark in analytic geometry is that the existence of global sections of a hermitian line bundle is strictly correlated with the positivity properties of its Chern curvature form. One of the countless correlations, is given by the theory of Demailly's holomorphic Morse inequalities \cite{Dem85}. We summarize his main result here below.

\subsubsection{Holomorphic Morse inequalities}

Let $X$ be a compact K\"ahler manifold of dimension $n$, $E$ a holomorphic vector bundle of rank $r$ and $L$ a line bundle over $X$. If $L$ is equipped with a smooth metric of curvature form $\Theta(L)$, we define the $q$-index set of $L$ to be the open subset
$$
X(q,L)=\left\{x\in X\mid\text{$i\,\Theta(L)$ has $\begin{matrix} \text{$q$ negative eigenvalues} \\ \text{$n-q$ positive eigenvalues}\end{matrix}$}\right\},
$$
for $q=0,\dots,n$. Hence $X$ admits a partition $X=\Delta\cup\bigcup_{q=0}^nX(q,L)$, where $\Delta=\{x\in X\mid\det(i\,\Theta(L))=0\}$ is the degeneracy set. We also introduce
$$
X(\le q,L)\overset{\text{\rm def}}=\bigcup_{j=0}^qX(j,L).
$$
It was shown by Demailly in \cite{Dem85}, that the partial alternating sums of the dimension of the cohomology groups of tensor powers of $L$ with values in $E$ satisy the following asymptotic \emph{strong Morse inequalities} as $k\to+\infty$:
$$
\sum_{j=0}^q(-1)^{q-j}h^j(X,L^{\otimes k}\otimes E)\le r\frac{k^n}{n!}\int_{X(\le q,L)}(-1)^q\biggl(\frac{i}{2\pi}\,\Theta(L)\biggr)^n+O(k^{n-1}).
$$
In particular, if
$$
\int_{X(\le 1,L)}\biggl(\frac{i}{2\pi}\,\Theta(L)\biggr)^n>0,
$$
then some high power of $L$ twisted by $E$ has a (many, in fact) nonzero section. 

\vspace{0,4cm}

The idea is now to apply holomorphic Morse inequalities to the anti-tautological line bundle $\mathcal O_{X_k}(1)$ together with the asymptotic hermitian metric constructed above, to find global sections of invariant $k$-jet differentials on a surface $X$: we shall deal with the absolute case $V=T_X$. Our first geometrical hypothesis is to suppose $X$ to be K\"ahler-Einstein, that is with ample canonical bundle. Nevertheless, standard arguments coming from the theory of Monge-Amp\`ere equations, will show that we just need to assume $X$ to be minimal and of general type, that is $K_X$ big and numerically effective. Finally, once sections are found, we can drop the hypothesis of nefness, since the dimension of the space of global section of jet differentials is a birational invariant (see, for instance, \cite{GG80} and \cite{Dem97}).

For each $k\ge 1$, in $\mathbb R^k$ define the closed convex cone $\mathfrak N=\{\mathbf a=(a_1,\dots,a_k)\in\mathbb R^k\mid a_j\ge2\sum_{\ell={j+1}}^ka_\ell$ for all $j=1,\dots,k-1$ and $a_k\ge 0\}$. For $X$ a smooth compact surface, set
$$
\mathcal O_{X_k}(\mathbf a)^{k+2}=F_k(\mathbf a)\,c_1(X)^2-G_k(\mathbf a)\,c_2(X)
$$
(see next section for the definition of the weighted line bundle $\mathcal O_{X_k}(\mathbf a)$) and 
$$
m_k=\sup_{\mathbf a\in\mathfrak N\setminus\Sigma_k}\frac{F_k(\mathbf a)}{G_k(\mathbf a)},
$$
where $\Sigma_k$ is the zero locus of $G_k$. Finally, call $m_\infty$ the supremum of the sequence $\{m_k\}$.

\begin{theorem}\label{4.2} 
Notations as above, the two following facts can occur: 

\noindent
either
\begin{itemize}
\item there exists a $k_0\ge 1$ such that for every surface $X$ of general type, $\mathcal O_{X_{k_0}}(1)$ is big,
\end{itemize}
or
\begin{itemize}
\item the sequence $\{m_k\}$ is positive non-decreasing and for $X$ a surface of general type, there exists a positive integer $k$ such that $\mathcal O_{X_k}(1)$ is big as soon as $m_\infty>c_2(\widehat X)/c_1(\widehat X)^2$, where $\widehat X$ is the minimal model of $X$. 
\end{itemize}
\end{theorem}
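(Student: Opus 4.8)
The plan is to apply Demailly's holomorphic Morse inequalities, as recalled above, to the weighted anti-tautological bundle $L=\mathcal O_{X_k}(\mathbf a)$ endowed with the smooth metric produced by Theorem~\ref{4.1}, on the $(k+2)$-dimensional manifold $X_k$, letting $\mathbf a$ range over the interior of the cone $\mathfrak N$ and $\varepsilon^{(k)}\to 0$. Before doing so I would carry out the two geometric reductions announced in the introduction. By birational invariance of $h^0(X_k,\mathcal O_{X_k}(m))$, proving that $\mathcal O_{X_k}(1)$ is big for a surface of general type reduces to the same statement for its minimal model, so we may and do assume $X$ itself minimal; and then, after the Aubin-Yau theorem (extended to the big and nef case by a Monge-Amp\`ere limiting argument over $K_X+\delta A$, with $A$ ample and $\delta\to 0$), we may assume $X$ carries a K\"ahler-Einstein metric $\omega$ of negative Einstein constant, so that the Ricci curvature of $\omega$ is everywhere negative.

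The crucial point is to locate the index set $X_k(\le 1,L)$. For $\mathbf a$ in the interior of $\mathfrak N$ the bundle $\mathcal O_{X_k}(\mathbf a)$ is relatively positive over $X$ (see \cite{Dem97}), hence $i\,\Theta(\mathcal O_{X_k}(\mathbf a))$ is positive along each of the $k$ vertical directions of the tower and so has at most $2=\dim X$ negative eigenvalues. By Theorem~\ref{4.1}, as $\varepsilon^{(k)}\to 0$ the curvature becomes, up to negligible terms, block diagonal, the two remaining eigenvalues being those of a \emph{horizontal} $2\times 2$ hermitian form $H_k$ assembled from the curvature of $V=T_X$ and the universal structure of $X_k\to X$. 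Its level-one contribution is, up to a positive pointwise factor, the restriction of $-\,\Theta(T_X)$ to the tautological line $\mathbb C v$: since the horizontal part of the second fundamental form of $\mathcal O_{X_1}(-1)\subset\pi^*T_X$ vanishes, this restriction produces no mixed term, and its trace against $\omega$ equals $-|v|^{-2}$ times the Ricci curvature of $\omega$ in the direction $v$, hence is strictly positive. The defining inequalities $a_j\ge 2\sum_{\ell>j}a_\ell$ of $\mathfrak N$ are precisely what makes this positive term dominate, uniformly on the compact $X_k$, the finitely many remaining structural contributions to $H_k$; so $\tr H_k>0$, the form $H_k$ cannot be negative definite, and (the condition being open, it persists for $\varepsilon^{(k)}$ small enough) $i\,\Theta(\mathcal O_{X_k}(\mathbf a))$ has at most one negative eigenvalue. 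Thus $X_k(\le 1,\mathcal O_{X_k}(\mathbf a))=X_k\setminus\Delta$.

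Since the degeneracy set is negligible for the integral of a top power, it follows that
$$
\int_{X_k(\le 1,\,\mathcal O_{X_k}(\mathbf a))}\Bigl(\tfrac i{2\pi}\,\Theta(\mathcal O_{X_k}(\mathbf a))\Bigr)^{k+2}=\int_{X_k}\Bigl(\tfrac i{2\pi}\,\Theta(\mathcal O_{X_k}(\mathbf a))\Bigr)^{k+2}=\mathcal O_{X_k}(\mathbf a)^{k+2}=F_k(\mathbf a)\,c_1(X)^2-G_k(\mathbf a)\,c_2(X),
$$
with $F_k,G_k$ the universal polynomials read off from the recursive curvature formulae of Theorem~\ref{4.1}. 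By the strong Morse inequalities in degree $q=1$, whenever the right-hand side is positive the dimension of $H^0(X_k,\mathcal O_{X_k}(m\mathbf a))$ grows like $m^{k+2}$, so $\mathcal O_{X_k}(\mathbf a)$ is big; comparing weighted and unweighted tautological bundles along $\mathfrak N$ (as in \cite{Dem97}), $\mathcal O_{X_k}(1)$ is then big, which is the assertion. It remains to organize the dichotomy, using that $c_1(X)^2>0$ and $c_2(X)>0$ for a minimal surface of general type and that $F_k,G_k$ do not depend on $X$. If there exist $k_0$ and $\mathbf a_0\in\mathfrak N$ with $F_{k_0}(\mathbf a_0)>0$ and $G_{k_0}(\mathbf a_0)\le 0$, the displayed quantity is positive for every surface of general type, giving the first alternative. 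Otherwise $m_k$ is a well-defined real number; from $\mathcal O_{X_1}(a_1)^3=a_1^3\bigl(c_1(X)^2-c_2(X)\bigr)$ one reads off $m_1=1$, and $m_k$ is non-decreasing (letting the last weight of $\mathbf a'\in\mathfrak N\subset\mathbb R^{k+1}$ tend to $0$, and using that fibre integration along the $\mathbb P^1$-bundle $X_{k+1}\to X_k$ carries the relative hyperplane class to $1$, gives $\lim_{\varepsilon\to 0^+}F_{k+1}(\mathbf a,\varepsilon)/G_{k+1}(\mathbf a,\varepsilon)=F_k(\mathbf a)/G_k(\mathbf a)$), so $\{m_k\}$ is positive non-decreasing. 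Finally, if $m_\infty>c_2(X)/c_1(X)^2$ (the hypothesis of the statement, $X$ being the minimal model) there are $k$ and $\mathbf a\in\mathfrak N\setminus\Sigma_k$ with $F_k(\mathbf a)\,c_1(X)^2>G_k(\mathbf a)\,c_2(X)$, and the argument above applies.

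The main obstacle is the eigenvalue count of the second step: proving that for every $\mathbf a\in\mathfrak N$ the Chern curvature of $\mathcal O_{X_k}(\mathbf a)$ has at most one negative eigenvalue. This forces one to push the recursive description of Theorem~\ref{4.1} and the $\varepsilon^{(k)}\to 0$ block-diagonalization through in full detail, so as to verify that the negative-Ricci term genuinely overwhelms all the structural curvature terms, uniformly in the base point and in $k$, with the constant $2$ in the definition of $\mathfrak N$ performing exactly the required bookkeeping. Everything else is either one of the standard reductions or a formal manipulation of the universal polynomials $F_k$ and $G_k$.
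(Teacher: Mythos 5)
Your overall strategy is the same as the paper's: apply holomorphic Morse inequalities to $\mathcal O_{X_k}(\mathbf a)$ with the Demailly-type metric, reduce to the K\"ahler--Einstein case through birational invariance and a Monge--Amp\`ere approximation, show the $1$-index set fills out $X_k$ for $\mathbf a\in\mathfrak N$, and organize the final dichotomy around the universal polynomials $F_k,G_k$; the reductions, the monotonicity argument for $\{m_k\}$ via $u_{k}$-fibre integration, and the use of Proposition~\ref{bigness} all match. But there are two genuine gaps. The first you acknowledge yourself: the eigenvalue count (at most one negative eigenvalue of $i\,\Theta(\mathcal O_{X_k}(\mathbf a))$) is not actually proved, and the heuristic you substitute --- that the level-one horizontal trace equals $-|v|^{-2}$ times the Ricci curvature in the direction $v$, with all higher contributions subordinated by the constant $2$ in $\mathfrak N$ --- is not the mechanism. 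The paper instead computes the horizontal trace explicitly as $\sum_s a_{s+1}w_{s,1}$ with $w_{p,q}=\alpha_{p,q}+\beta_{p,q}$, finds a closed recursion $w_{p,j-1}=(y_{p,j-1}+2w_{p,j})/3$, and rewrites the whole sum as a positive combination of the vertical eigenvalues $\theta_\ell^k$ plus manifestly positive terms (Lemmas~\ref{positiveeigenvalues}--\ref{positivetrace}). That combinatorial control over the $\alpha_{\ell,1},\beta_{\ell,1}$ is essential and is not a consequence of Ricci negativity alone.

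The second gap is the more serious one: you never establish $G_k\ge 0$ on $\mathfrak N$, yet your concluding step needs it. From $m_\infty>c_2/c_1^2$ you infer the existence of $k$ and $\mathbf a\in\mathfrak N\setminus\Sigma_k$ with $F_k(\mathbf a)/G_k(\mathbf a)>c_2/c_1^2$ and then pass to $F_k(\mathbf a)\,c_1^2>G_k(\mathbf a)\,c_2$; that multiplication by $G_k(\mathbf a)$ only preserves the inequality if $G_k(\mathbf a)>0$. Your ``otherwise'' hypothesis (no $\mathbf a$ with $F_k(\mathbf a)>0$ and $G_k(\mathbf a)\le 0$) leaves open the possibility $F_k(\mathbf a)<0$ and $G_k(\mathbf a)<0$ with $F_k/G_k$ large and positive, in which case the implication fails. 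The paper closes this in Lemma~\ref{3FG}: specializing to a compact unramified quotient of $\mathbb B_2$, where $D\equiv 2/9$ and $c_1^2=3c_2$ by Bogomolov--Miyaoka--Yau, and using the determinant positivity of Lemma~\ref{positivedeterminant} (which you omit entirely), one gets $3F_k-G_k\ge 0$ on $\mathfrak N$; combined with $c_1^2<3c_2$ on a non-ball-quotient minimal surface it forces $G_k\ge 0$ and $G_k\not\equiv 0$ under the hypothesis ($\sharp$). Without that model computation, the density of $\mathfrak N\setminus\Sigma_k$, the bound $m_k\ge 1/3$, and above all the correctness of your final step are not established. Both missing pieces are precisely the places the paper has to work hardest, so while your skeleton is correct, the proposal does not constitute a proof.
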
 

As a corollary, we obtain the existence of low order jet differentials, for $X$ a minimal surface of general type whose Chern classes satisfy certain inequalities. This will be done in \S\ref{lb}.

\subsection*{Acknowledgments} We would like to thank Stefano Trapani for his suggestions, comments and remarks which have finally lead to the present version of Theorem 1.2. 

\section{Projectivized jet bundles}

Let $(X,V)$ be a complex directed manifold, that is a pair consisting in a smooth complex manifold $X$ and a holomorphic (non necessarily integrable) subbundle $V\subset T_X$ of the tangent bundle.
Set $\widetilde X = P(V)$. Here, $P(V)$ is the projectivized bundle of lines of $V$ and there is a natural projection $\pi\colon\widetilde X\to X$; moreover, if $\dim X=n$, then $\dim\widetilde X=n+r-1$, if $\rank V=r$. On $\widetilde X$, we consider the tautological line bundle $\mathcal O_{\widetilde X}(-1)\subset\pi^* V$ which is defined fiberwise as 
$$
\mathcal O_{\widetilde X}(-1)_{(x,[v])}\overset{\text{\rm def}}=\mathbb C\,v,
$$
for $(x,[v])\in\widetilde X$, with $x\in X$ and $v\in V_x\setminus\{0\}$. Next, set $\widetilde V=\pi_*^{-1}\mathcal O_{\widetilde X}(-1)$, where $\pi_*\colon T_{\widetilde X}\to\pi^* T_X$ is the differential of the projection: this is a holomorphic subbundle of $T_{\widetilde X}$ of rank $r$, so that we obtain in this fashion a new directed manifold $(\widetilde X,\widetilde V)$.

Now, we start the inductive process in the directed manifold category by setting
$$
(X_{0},V_{0})=(X,V),\quad (X_{k},V_{k})=(\tilde X_{k-1},\tilde V_{k-1}).
$$
In other words, $(X_k,V_k)$ is obtained from $(X,V)$ by iterating $k$ times the projectivization construction $(X,V)\mapsto (\tilde X,\tilde V)$ described above.

In this process, the rank of $V_k$ remains constantly equal to $r$ while the dimension of $X_k$ growths linearly with $k$: $\dim X_k=n+k(r-1)$. Let us call $\pi_k\colon X_k\to X_{k-1}$ the natural projection. Then we have, as before, a tautological line bundle $\mathcal O_{X_k}(-1)\subset\pi_k^* V_{k-1}$ over $X_k$ which fits into short exact sequences
\begin{equation}\label{ses1}
\xymatrix{
0\ar[r] & T_{X_k/ X_{k-1}}\ar[r] &V_k \ar[r]^{\!\!\!\!\!\!\!\!(\pi_k)_*} & \mathcal O_{X_k}(-1)\ar[r] & 0}
\end{equation}
and
\begin{equation}\label{ses2}
\xymatrix{
0\ar[r] &\mathcal O_{X_k}\ar[r] &\pi_k^* V_{k-1}\otimes\mathcal O_{X_k}(1)\ar[r] & T_{X_k/X_{k-1}}\ar[r] & 0,}
\end{equation}
where $T_{X_k/ X_{k-1}}=\ker(\pi_k)_*$ is the relative tangent bundle.

More generally, if $\mathbf a=(a_1,\dots,a_k)\in\mathbb Z^k$ is a weight, we can form the line bundle $\mathcal O_{X_k}(\mathbf a)$ by setting
$$
\mathcal O_{X_k}(\mathbf a)=\bigotimes_{j=1}^k\pi_{j,k}^*\mathcal O_{X_j}(a_j).
$$
We shall see later how, for appropriate choices of $\mathbf a$, one can obtain relatively positive line bundles $\mathcal O_{X_k}(\mathbf a)$ which, moreover, admit a non-trivial morphism to $\mathcal O_{X_k}(a_1+\cdots+a_k)$ (for the last assertion see for example \cite{Dem97}). In particular, sections of $\mathcal O_{X_k}(\mathbf a)$ for a suitable choice of $\mathbf a$ give rise to sections of $\mathcal O_{X_k}(m)$ for some large $m$.
 
\section{From $(X,V)$ to $(\tilde X,\tilde V)$}

Let $(X,V)$ be a compact directed manifold of complex dimension $n$ and $\rank V=r$. In this section, given a hermitian metric $\omega$ on $V$, we construct a (family of) metric on $\tilde V$ depending on a \lq\lq small\rq\rq{} positive constant $\varepsilon$, and we compute the curvature of $\tilde V$ with respect to this metric, letting $\varepsilon$ tend to zero.

So, fix a hermitian metric $\omega$ on $V$, a point $x_0\in X$ and a unit vector $v_0\in V_{x_0}$ with respect to $\omega$. Then there exist coordinates $(z_1,\dots,z_n)$ centered at $x_0$ and a holomorphic normal local frame $e_1,\dots,e_r$ for $V$ such that $e_r(x_0)=v_0$ and
$$
\omega(e_\lambda,e_\mu)=\delta_{\lambda\mu}-\sum_{j,k=1}^n c_{jk\lambda\mu}\,z_j\overline z_k+O(|z|^3).
$$
Remark that, as $V$ is a holomorphic subbundle of the holomorphic tangent space of $X$, then there exists a holomorphic matrix $(g_{i\lambda}(z))$ such that $e_\lambda(z)=\sum_{i=1}^n g_{i\lambda}(z)\frac\partial{\partial z_i}$.

Moreover, the Chern curvature at $x_0$ of $V$ is expressed by
$$
\Theta(V)_{x_0}=\sum_{j,k=1}^n\sum_{\lambda,\mu=1}^r c_{jk\lambda\mu}\,\dz j\wedge\dzb k\otimes e_\lambda^*\otimes e_\mu.
$$
Now consider the projectivized bundle $\pi\colon P(V)=\widetilde X\to X$ of lines in $V$: its points can be seen as pairs $(x,[v])$ where $x\in X$, $v\in V_{x}\setminus\{0\}$ and $[v]=\mathbb C v$. In a neighborhood of $(x_0,[v_0])\in\tilde X$ we have local holomorphic coordinates given by $(z,\xi_1,\dots,\xi_{r-1})$ where $\xi$ corresponds to the direction $[\xi_1 e_1(z)+\cdots+\xi_{r-1} e_{r-1}(z)+e_r(z)]$ in $V_z$.

On $\tilde X$ we have a tautological line bundle $\mathcal O_{\tilde X}(-1)\subset\pi^*V$ such that the fiber over $(x,[v])$ is simply $[v]$: then $\mathcal O_{\tilde X}(-1)\subset\pi^*V$ inherits a metric from $V$ in such a way that its local non vanishing section $\eta(z,\xi)=\xi_1 e_1(z)+\cdots+\xi_{r-1} e_{r-1}(z)+e_r(z)$ has squared length
$$
\begin{aligned}
|\eta|_\omega^2= & 1+|\xi|^2-\sum_{j,k,\lambda,\mu}c_{jk\lambda\mu}\,z_j\overline z_k\xi_\lambda\bar\xi_\mu-\sum_{j,k,\lambda}c_{jk\lambda r}\,z_j\overline z_k\xi_\lambda \\
& -\sum_{j,k,\mu}c_{jkr\mu}\,z_j\overline z_k\bar\xi_\mu-\sum_{j,k}c_{jkrr}\,z_j\overline z_k+O(|z|^3).
\end{aligned}
$$
So we have
$$
\begin{aligned}
\bar\partial|\eta|_\omega^2=&\sum_{\mu}\xi_\mu\,d\bar\xi_\mu-\sum_{j,k,\lambda,\mu}c_{jk\lambda\mu}z_j\overline z_k\xi_\lambda\,d\bar\xi_\mu\\
 &-\sum_{j,k}c_{jkrr}z_j\,d\overline z_k+O((|z|+|\xi|)^2|dz|+|z|^2|d\xi|), \\
 \partial|\eta|_\omega^2=&\sum_{\lambda}\bar\xi_\lambda\,d\xi_\lambda-\sum_{j,k,\lambda,\mu}c_{jk\lambda\mu}z_j\overline z_k\bar\xi_\mu\,d\xi_\lambda\\
& -\sum_{j,k}c_{jkrr}\bar z_k\,d z_j+O((|z|+|\xi|)^2|dz|+|z|^2|d\xi|), \\
 \partial\bar\partial|\eta|_\omega^2=&
\sum_{\lambda}d\xi_\lambda\wedge d\bar\xi_\lambda-\sum_{j,k,\lambda,\mu}c_{jk\lambda\mu}z_j\overline z_k\,d\xi_\lambda\wedge d\bar\xi_\mu 
-\sum_{j,k=1}^n c_{jkrr}\,\dz j\wedge\dzb k\\&+O((|z|+|\xi|)|dz|^2+|z|\,|dz|\,|d\xi|+(|z|+|\xi|)^3|d\xi|^2),
\end{aligned}
$$
where all the summations here are taken with $j,k=1,\dots,n$ and $\lambda,\mu=1,\dots,r-1$. We remark that inside the $O$'s there are hidden terms which are useless for our further computations. 
We finally obtain
$$
\begin{aligned}
\Theta(\mathcal O_{\tilde X}(1)) & = \partial\bar\partial\log|\eta|_\omega^2  = -\frac 1{|\eta|_\omega^4}\partial|\eta|_\omega^2\wedge\bar\partial|\eta|_\omega^2
+\frac 1{|\eta|_\omega^2}\partial\bar\partial|\eta|_\omega^2 \\
& =\sum_{\lambda,\mu}\biggl(-\xi_\mu\bar\xi_\lambda-\sum_{j,k}c_{jk\lambda\mu}z_j\bar z_k \\
& \quad +\delta_{\lambda\mu}\bigl(1-|\xi|^2+\sum_{j,k}c_{jkrr}z_j\bar z_k)\bigr)\biggr)\,d\xi_\lambda\wedge d\bar\xi_\mu \\
& \quad -\sum_{j,k} c_{jkrr}\,\dz j\wedge\dzb k\\&\quad+O((|z|+|\xi|)|dz|^2+|z|\,|dz|\,|d\xi|+(|z|+|\xi|)^3|d\xi|^2).
\end{aligned}
$$
So we get in particular
$$
\Theta(\mathcal O_{\tilde X}(1))_{(x_0,[v_0])}=
\sum_{\lambda=1}^{r-1}d\xi_\lambda\wedge d\bar\xi_\lambda-\sum_{j,k=1}^n c_{jkrr}\,\dz j\wedge\dzb k,
$$
which shows that
$$
\Theta(\mathcal O_{\tilde X}(1))_{(x_0,[v_0])}=|\bullet|_{\text{FS}}^2-\theta_{V,x_0}(\bullet\otimes v_0,\bullet\otimes v_0),
$$
where FS denotes the Fubini-Study metric along the vertical tangent space $\ker\pi_*$ and $\theta_{V,x_0}$ is the natural hermitian form on $T_X\otimes V$ corresponding to $i\,\Theta(V)$, at the point $x_0$.

Now consider the rank $r$ holomorphic subbundle $\tilde V$ of $T_{\tilde X}$ whose fiber over a point $(x,[v])$ is given by   
$$
\tilde V_{(x,[v])}=\{\tau\in T_{\tilde X}\mid\pi_*\tau\in\mathbb C v\}.
$$
To start with, let's consider the holomorphic local frame of $\tilde V$ given by $\x 1,\dots,\x{r-1},\tilde\eta$, where 
$$
\tilde\eta(z,\xi)=\sum_{i=1}^n\biggl(g_{ir}(z)+\sum_{\lambda=1}^{r-1} g_{i\lambda}(z)\xi_\lambda\biggr)\frac\partial{\partial z_i},
$$
so that $\tilde\eta$ formally is equal to $\eta$ but here, with a slight abuse of notations, the $\z i$ are regarded as tangent vector fields to $\tilde X$ (so, $\widetilde\eta$ actually means a lifting of $\eta$ from $\mathcal O_{\tilde X}(-1)\subset\pi^*V\subset\pi^* T_X$ to $T_{\tilde X}$, which admits $\pi^* T_X$ as a quotient).
For all sufficiently small $\epsilon>0$ we get an hermitian metric on $\tilde V$ by restricting $\tilde\omega_\epsilon=\pi^*\omega+\epsilon^2\,\Theta(\mathcal O_{\tilde X}(1))$ to $\tilde V$; at the point $(x_0,[v_0])=(0,0)$ we have
$$
\begin{aligned}
\tilde\omega_\epsilon\biggl(\x\lambda,\x\mu\biggr)= & \underbrace{\pi^*\omega\biggl(\x\lambda,\x\mu\biggr)}_{=0}
+\epsilon^2\,\Theta(\mathcal O_{\tilde X}(1))\biggl(\x\lambda,\x\mu\biggr) \\
= & \,\delta_{\lambda\mu}\epsilon^2,
\end{aligned}
$$
$$
\begin{aligned}
\tilde\omega_\epsilon\biggl(\x\lambda,\tilde\eta\biggr)= & \underbrace{\pi^*\omega\biggl(\x\lambda,\tilde\eta\biggr)}_{=0}
+\epsilon^2\,\Theta(\mathcal O_{\tilde X}(1))\biggl(\x\lambda,\tilde\eta\biggr) \\
= & 0,\quad\text{since $\Theta(\mathcal O_{\tilde X}(1))_{(x_0,[v_0])}\biggl(\x\lambda,\frac\partial{\partial z_i}\biggr)=0$}
\end{aligned}
$$
and
$$
\begin{aligned}
\tilde\omega_\epsilon(\tilde\eta,\tilde\eta)= & \pi^*\omega(\tilde\eta,\tilde\eta)
+\epsilon^2\,\Theta(\mathcal O_{\tilde X}(1))(\tilde\eta,\tilde\eta) \\
= & |\eta(0,0)|_\omega^2+O(\epsilon^2)=1+O(\epsilon^2).
\end{aligned}
$$
We now renormalize this local frame of $\tilde V$ by setting
$$
f_1=\frac 1\epsilon\x 1,\dots,f_{r-1}=\frac 1\epsilon\x{r-1},f_r=C_\epsilon\,\tilde\eta,
$$
where 
$$
C_\epsilon=\frac 1{\sqrt{\tilde\omega_\epsilon(\tilde\eta,\tilde\eta)}}=1+O(\epsilon).
$$
Then $(f_\lambda)$ is unitary at $(x_0,[v_0])$ with respect to $\tilde\omega_\epsilon$ and we have
$$
\tilde\omega_\epsilon(f_\lambda,f_\mu)=
\begin{cases}
-\xi_\mu\bar\xi_\lambda-\sum_{j,k}c_{jk\lambda\mu}z_j\bar z_k & \text{if $1\le\lambda,\mu\le r-1$} \\
+\delta_{\lambda\mu}(1-|\xi|^2+\sum_{j,k}c_{jkrr}z_j\bar z_k) & \\
 0 & \text{if $1\le\lambda\le r-1$ and $\mu=r$} \\
 & \text{or $1\le\mu\le r-1$ and $\lambda=r$} \\
 |\eta|_\omega^2 & \text{if $\lambda=\mu=r$},
\end{cases} 
$$
modulo $\epsilon$ and terms of order three in $z$ and $\xi$.

Next, we compute the curvature 
$$
\Theta(\tilde V)_{(x_0,[v_0])}=\sum_{j,k=1}^{n+r-1}\sum_{\lambda,\mu=1}^r \gamma_{jk\lambda\mu}\,\dz j\wedge\dzb k\otimes f_\lambda^*\otimes f_\mu
$$ 
for $\epsilon\to 0$, where we have set $z_{n+\lambda}=\xi_\lambda$.
Recall that for a hermitian vector bundle $E\to Y$, given a holomorphic trivialization, the curvature operator at a point $0\in Y$ is given by
$$
\Theta(E)_{0}=\bar\partial(\bar H^{\,-1}\partial\bar H)(0)=(\bar\partial\,\bar H^{\,-1})(0)\wedge(\partial\bar H)(0)+\bar H^{\,-1}(0)(\bar\partial\partial\bar H)(0),
$$
where $H$ is the hermitian matrix of hermitian products between the elements of the local frame.
If the local holomorphic frame is unitary in $0$, so that $H(0)=$ Id, observing that $0=\bar\partial(\bar H^{\,-1}\bar H)=(\bar\partial\,\bar H^{\,-1})(0)\,\bar H(0)+\bar H^{\,-1}(0)(\bar\partial\,\bar H)(0)$, we obtain
\begin{equation}\label{cur}
\Theta(E)_{0}=-\bar\partial\,\bar H(0)\wedge\partial\bar H(0)+\bar\partial\partial\bar H(0).
\end{equation}
Thus, in our case, it suffices to compute the part with second derivatives in (\ref{cur}) to get the following proposition.

\begin{proposition}
Notations as given, the Chern curvature of $\tilde V$ has the following expression:   
\begin{equation}\label{curV}
\begin{aligned}
\Theta(\tilde V)= & \sum_{\lambda,\mu=1}^{r-1}\left(d\xi_\mu\wedge d\bar\xi_\lambda+\sum_{j,k=1}^{n}c_{jk\lambda\mu}\,\dz j\wedge\dzb k \right.\\
&+\left.\delta_{\lambda\mu}\biggl(\sum_{\nu=1}^{r-1} d\xi_\nu\wedge d\bar\xi_\nu-\sum_{j,k=1}^n c_{jkrr}\,\dz j\wedge\dzb k\biggr)\right)\otimes f_\lambda^*\otimes f_\mu \\
& +\biggl(\sum_{j,k=1}^n c_{jkrr}\,\dz j\wedge\dzb k-\sum_{\nu=1}^{r-1} d\xi_\nu\wedge d\bar\xi_\nu\biggr)\otimes f_r^*\otimes f_r+O(\epsilon).
\end{aligned}
\end{equation}
In particular, we get the following identities modulo $\epsilon$:
$$
\begin{aligned}
& \gamma_{jk\lambda\mu}=
\begin{cases}
c_{jk\lambda\mu}-\delta_{\lambda\mu}c_{jkrr}& \text{if $1\le j,k\le n$ and $1\le\lambda,\mu\le r-1$} \\
\delta_{\lambda\mu}\delta_{jk}+\delta_{(j-n)\mu}\delta_{(k-n)\lambda} & \text{if $n+1\le j,k\le n+r-1$}\\
&  \text{and $1\le\lambda,\mu\le r-1$,}
\end{cases} \\
& \gamma_{jkrr}=
\begin{cases}
c_{jkrr} & \text{if $1\le j,k\le n$} \\
-1 & \text{if $n< j=k\le n+r-1$},
\end{cases}
\end{aligned}
$$
the remaining coefficients being zero.
\end{proposition}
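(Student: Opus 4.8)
The plan is to apply the curvature formula~(\ref{cur}) to the local frame $f_1=\frac1\epsilon\x1,\dots,f_{r-1}=\frac1\epsilon\x{r-1},f_r=C_\epsilon\,\tilde\eta$ of $\tilde V$. This frame is holomorphic, since the $g_{i\lambda}(z)$ are holomorphic, the $\xi_\lambda$ are coordinates, and $\frac1\epsilon$, $C_\epsilon$ are constants; moreover, by the computation carried out just above, it is unitary at $(x_0,[v_0])=(0,0)$ with respect to $\tilde\omega_\epsilon$. Hence, writing $H=\bigl(\tilde\omega_\epsilon(f_\lambda,f_\mu)\bigr)$, we have $\bar H(0)=\mathrm{Id}$, so that $\Theta(\tilde V)_{(0,0)}=-\bar\partial\bar H(0)\wedge\partial\bar H(0)+\bar\partial\partial\bar H(0)$.

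The first step is to read off, from the table for $\tilde\omega_\epsilon(f_\lambda,f_\mu)$ obtained above, that \emph{modulo $\epsilon$ and modulo terms of order $\ge3$ in $(z,\xi)$} every entry of $H$ equals $\delta_{\lambda\mu}$ plus a quadratic form in $(z,\bar z,\xi,\bar\xi)$ containing no purely holomorphic and no purely antiholomorphic monomial --- only mixed ones, of type $z_j\bar z_k$ or $\xi_p\bar\xi_q$. Therefore $\partial\bar H(0)=O(\epsilon)$ and $\bar\partial\bar H(0)=O(\epsilon)$, so the summand $-\bar\partial\bar H(0)\wedge\partial\bar H(0)$ is $O(\epsilon^2)$ and may be absorbed in the $O(\epsilon)$; this is precisely why, as remarked before the statement, only the second-derivative part of~(\ref{cur}) contributes. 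The same absence of low-order terms shows that the order $\ge3$ remainder in $H$ does not affect $\bar\partial\partial\bar H(0)$, its second derivatives vanishing at the origin.

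It then remains to compute $\bar\partial\partial\bar H(0)$ block by block from the explicit quadratic parts, using that $\bar\partial\partial$ sends $z_j\bar z_k$ to $-dz_j\wedge d\bar z_k$ and $\xi_p\bar\xi_q$ to $-d\xi_p\wedge d\bar\xi_q$. For $1\le\lambda,\mu\le r-1$ one conjugates $-\xi_\mu\bar\xi_\lambda-\sum_{j,k}c_{jk\lambda\mu}z_j\bar z_k+\delta_{\lambda\mu}\bigl(-|\xi|^2+\sum_{j,k}c_{jkrr}z_j\bar z_k\bigr)$, invoking the Hermitian symmetry $\overline{c_{jk\lambda\mu}}=c_{kj\mu\lambda}$ to put it back into the form $\sum a_{jk}z_j\bar z_k+\sum b_{pq}\xi_p\bar\xi_q$; for $\lambda=\mu=r$ one uses the relevant quadratic part $|\xi|^2-\sum_{j,k}c_{jkrr}z_j\bar z_k$ of $|\eta|_\omega^2$; the blocks with exactly one index equal to $r$ are $O(\epsilon)$. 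Reading off the resulting matrix of $(1,1)$-forms against $f_\lambda^*\otimes f_\mu$ and setting $z_{n+\nu}=\xi_\nu$, so that $d\xi_\nu\wedge d\bar\xi_\nu=dz_{n+\nu}\wedge d\bar z_{n+\nu}$, produces~(\ref{curV}) together with the listed expressions for the $\gamma_{jk\lambda\mu}$. Finally, since $x_0$ and the unit vector $v_0$ were arbitrary and everything is written in the adapted coordinates and frame, the identities hold at every point of $\tilde X$.

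The main difficulty is bookkeeping, not ideas. One must keep index placement straight: differentiating $\bar H$ rather than $H$ transposes and conjugates indices, which is exactly what turns the $d\xi_\lambda\wedge d\bar\xi_\mu$ coming out of $\bar\partial\partial\bar H_{\mu\lambda}$ into the term $d\xi_\mu\wedge d\bar\xi_\lambda$ attached to $f_\lambda^*\otimes f_\mu$ in~(\ref{curV}), and likewise converts $c_{jk\mu\lambda}$ back into $c_{jk\lambda\mu}$. One also has to check that the $\epsilon$- and order-$\ge3$-errors survive $\bar\partial\partial$ and evaluation at $0$ harmlessly, and that the normalisations $\frac1\epsilon$ and $C_\epsilon=1+O(\epsilon)$, being constant in $(z,\xi)$ up to $O(\epsilon)$, merely rescale rows and columns of $H$ and do not disturb $\bar\partial\partial\bar H(0)$ modulo $\epsilon$.
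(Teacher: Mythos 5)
Your proposal is correct and follows essentially the same route as the paper: apply formula~(\ref{cur}) to the normalized frame $(f_\lambda)$ and read off the curvature coefficients from the table of $\tilde\omega_\epsilon(f_\lambda,f_\mu)$ computed just before the proposition. The one place where you add something is the explicit justification --- via the absence of purely holomorphic or purely antiholomorphic monomials in $H$ modulo $\epsilon$ --- for why the first-order term $-\bar\partial\bar H(0)\wedge\partial\bar H(0)$ is $O(\epsilon^2)$ and may be discarded; the paper merely asserts that only the second-derivative part matters.
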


\section{A special choice of coordinates and local frames}

We now pass to the tower of projective bundles $(X_k,V_k)$ over $(X,V)$. We recall that we simply set $(X,V)=(X_0,V_0)$ and, for all integer $k>0$, $(X_k,V_k)=(\tilde X_{k-1},\tilde V_{k-1})$ together with the projection $\pi_{k-1,k}\colon X_k\to X_{k-1}$ so that the total fibration is given by $\pi_{0,k}=\pi_{0,1}\circ\pi_{1,2}\circ\dots\circ\pi_{k-1,k}\colon X_k\to X$. 

For all $k$, we also have a tautological line bundle $\mathcal O_{X_k}(-1)$ and a metric $\omega^{(k)}=\omega^{(k)}(\epsilon_1,\dots,\epsilon_{k})$ on $V_k$, with the $\epsilon_l$'s positive and small enough, obtained recursively by setting $\omega^{(k)}=\bigl(\pi_{k-1,k}^*\omega^{(k-1)}+\epsilon_k^2\,\Theta(\mathcal O_{X_k}(1))\bigr)|_{V_k}$, $\omega^{(0)}=\omega$. 

To start with, fix a point $x_0\in X$, a $\omega$-unitary vector $v_0\in V$ and a holomorphic local normal frame $(e_\lambda^{(0)})$ for $(V,\omega)$ such that $e_r(x_0)=v_0$. 

\subsubsection*{First step}

On $X_1$, we have local holomorphic coordinates centered at $(x_0,[v_0])$ given by $(z,\xi^{(1)})$ where, $(z,\xi)\mapsto[\xi_1^{(1)} e_1^{(0)}(z)+\cdots+\xi_{r-1}^{(1)} e_{r-1}^{(0)}(z)+e_r^{(0)}(z)]\in P(V_z)$. Recall that we have, as before, a ``natural'' local section $\eta_1$ of $\mathcal O_{X_1}(-1)$ given by
$$
\eta_1(z,\xi^{(1)})=\xi_1^{(1)} e_1(z)+\cdots+\xi_{r-1}^{(1)} e_{r-1}(z)+e_r(z)
$$
and for all $\epsilon_1>0$ small enough, a holomorphic local frame $(f_\lambda^{(1)})$ for $V_1$ near $(x_0,[v_0])$ which is a $\omega^{(1)}$-unitary basis for ${V_1}_{(x_0,[v_0])}$.

Now, choose a $\omega^{(1)}$-unitary vector $v_1\in{V_1}_{(x_0,[v_0])}$ and a holomorphic local normal frame $(e_\lambda^{(1)})$ for $V_1$ such that $e_r^{(1)}(x_0,[v_0])=v_1$. Then there exist a unitary $r\times r$ matrix $U_1=(a_{\lambda\mu}^{(1)})$ such that at $(x_0,[v_0])$ we have
$$
f_\mu^{(1)}=\sum_{\lambda=1}^r a_{\lambda\mu}^{(1)}\,e_\lambda^{(1)}.
$$
So, if we call respectively $\gamma_{ij\lambda\mu}^{(1)}$ and $c_{ij\lambda\mu}^{(1)}$ the coefficients of curvature of $V_1$ at $(x_0,[v_0])$ with respect to the basis $(f_\lambda^{(1)})$ and $(e_\lambda^{(1)})$ we have
$$
c_{ij\lambda\mu}^{(1)}=\sum_{\alpha,\beta=1}^{r}\gamma_{ij\alpha\beta}^{(1)}\,\bar a_{\lambda\alpha}^{(1)}a_{\mu\beta}^{(1)},
$$
with $i,j=1,\dots,n+(r-1)$ and $\lambda,\mu=1,\dots,r$.

\subsubsection*{General step}

For the general case, suppose for all $\epsilon_1,\dots,\epsilon_{k-1}>0$ small enough we have built a system of holomorphic coordinates $(z,\xi^{(1)},\dots,\xi^{(k-1)})$ for $X_{k-1}$ and a holomorphic local normal frame $(e_\lambda^{(k-1)})$ for $(V_{k-1},\omega^{(k-1)})$, $k\ge 2$, such that $e_r^{(k-1)}(x_0,[v_0],\dots,[v_{k-2}])=v_{k-1}$ where $v_{k-1}$ is a $\omega^{(k-1)}$-unitary vector in ${V_{k-1}}_{(x_0,[v_0],\dots,[v_{k-2}])}$. Our procedure gives us also a holomorphic local frame $(f_\lambda^{(k-1)})$ for $V_{k-1}$ near $(x_0,[v_0],\dots,[v_{k-2}])$ which is a $\omega^{(k-1)}$-unitary basis for ${V_{k-1}}_{(x_0,[v_0],\dots,[v_{k-2}])}$ and a unitary $r\times r$ matrix $U_{k-1}=(a_{\lambda\mu}^{(k-1)})$ such that
$$
f_\mu^{(k-1)}=\sum_{\lambda=1}^r a_{\lambda\mu}^{(k-1)}\,e_\lambda^{(k-1)}.
$$
Then, we put holomorphic local coordinates $(z,\xi^{(1)},\dots,\xi^{(k)})$ on $X_k$ centered at the point $(x_0,[v_0],\dots,[v_{k-1}])$ where
$$
\begin{aligned}
(z,\xi^{(1)},\dots,\xi^{(k)})\mapsto[&\xi_1^{(k)} e_1^{(k-1)}(z,\dots,\xi^{(k-1)})+\cdots+\xi_{r-1}^{(k)} e_{r-1}^{(k-1)}(z,\dots,\xi^{(k-1)})\\
&+e_r^{(k-1)}(z,\dots,\xi^{(k-1)})]\in P({V_{k-1}}_{(z,\dots,\xi^{(k-1)})})
\end{aligned}
$$
and also 
$$
\begin{aligned}
\eta_k(z,\xi^{(1)},\dots,\xi^{(k)})=&\xi_1^{(k)} e_1^{(k-1)}(z,\dots,\xi^{(k-1)})+\cdots+\xi_{r-1}^{(k)} e_{r-1}^{(k-1)}(z,\dots,\xi^{(k-1)})\\
&+e_r^{(k-1)}(z,\dots,\xi^{(k-1)})
\end{aligned}
$$
is a local nonzero section of $\mathcal O_{X_k}(-1)$.

As we have already done, if we call
$$
f_\lambda^{(k)}=\frac 1{\epsilon_k}\frac\partial{\partial\xi_\lambda^{(k)}},\quad\lambda=1,\dots,r-1,\qquad f_r^{(k)}=C_{\epsilon_k}^{(k)}\,\tilde\eta_k,
$$
where $C_{\epsilon_k}^{(k)}=\frac 1{\sqrt{\omega^{(k)}(\tilde\eta_k,\tilde\eta_k)}}=1+O(\epsilon_k)$, then $(f_\lambda^{(k)})$ is a local holomorphic frame for $V_k$, unitary at $(x_0,[v_0],\dots,[v_{k-1}])$. We now fix a $\omega^{(k)}$-unitary vector $v_k\in {V_k}_{(x_0,[v_0],\dots,[v_{k-1}])}$ and choose a holomorphic local normal frame $(e_\lambda)^{(k)}$ for $(V_k,\omega^{(k)})$ such that $e_\lambda^{(k)}(x_0,[v_0],\dots,[v_{k-1}])=v_k$ and a $r\times r$ unitary matrix $U_k=(a_{\lambda\mu}^{(k)})$ such that $f_\mu^{(k)}=\sum_{\lambda=1}^r a_{\lambda\mu}^{(k)}\,e_\lambda^{(k)}$.

So, if we call respectively $\gamma_{jk\lambda\mu}^{(k)}$ and $c_{jk\lambda\mu}^{(k)}$ the coefficients of curvature of $V_k$ at $(x_0,[v_0],\dots,[v_{k-1}])$ with respect to the basis $(f_\lambda^{(k)})$ and $(e_\lambda^{(k)})$ we have
\begin{equation}\label{ind}
c_{ij\lambda\mu}^{(k)}=\sum_{\alpha,\beta=1}^{r}\gamma_{ij\alpha\beta}^{(k)}\,\bar a_{\lambda\alpha}^{(k)}a_{\mu\beta}^{(k)},
\end{equation}
with $i,j=1,\dots,n+k(r-1)$ and $\lambda,\mu=1,\dots,r$.

\section{Curvature of $\mathcal O_{X_k}(1)$ and proof of Theorem \ref{4.1}}

We now use (\ref{curV}) and (\ref{ind}) to get the induction formulae to derive an expression for the curvature of $\mathcal O_{X_k}(1)$, when $\epsilon^{(k)}=(\epsilon_1,\dots,\epsilon_{k-1})$ tends to zero.

We start by observing that (\ref{curV}) shows how $\gamma_{ij\lambda\mu}^{(s)}$ depends on $c_{lm\alpha\beta}^{(s-1)}$; we rewrite here the dependence modulo $\epsilon_s$:
\begin{equation}\label{recur}
\begin{aligned}
& \gamma_{ij\lambda\mu}^{(s)}=
\begin{cases}
c_{ij\lambda\mu}^{(s-1)}-\delta_{\lambda\mu}c_{ijrr}^{(s-1)} & \text{if $1\le i,j\le n+(s-1)(r-1)$}\\
&\text{and $1\le\lambda,\mu\le r-1$} \\
\delta_{jk}\delta_{\lambda\mu} +& \text{if $i,j\ge n+(s-1)(r-1)+1$,} \\
\delta_{(i-n-(s-1)(r-1))\mu}\delta_{(j-n-(s-1)(r-1))\lambda}& \text{$i,j\le n+s(r-1)$}\\
& \text{and $1\le\lambda,\mu\le r-1$,}
\end{cases} \\
& \gamma_{ijrr}^{(s)}=
\begin{cases}
c_{ijrr}^{(s-1)} & \text{if $1\le i,j\le n+(s-1)(r-1)$} \\
-1 & \text{if $n+(s-1)(r-1)+1 \le i=j\le n+s(r-1)$},
\end{cases}
\end{aligned}
\end{equation}
the remaining coefficients being zero. 
Recall also that, by (\ref{ind}),
$$
c_{ij\lambda\mu}^{(s)}=\sum_{\alpha,\beta=1}^{r}\gamma_{ij\alpha\beta}^{(s)}\,\bar a_{\lambda\alpha}^{(s)}a_{\mu\beta}^{(s)}.
$$
Now, we have
\begin{equation}\label{theta}
\Theta(\mathcal O_{X_k}(1))_{(x_0,[v_0],\dots,[v_{k-1}])}=\sum_{\lambda=1}^{r-1}d\xi_\lambda^{(k)}\wedge d\bar\xi_{\lambda}^{(k)}-\sum_{i,j=1}^{n+(k-1)(r-1)}c_{ijrr}^{(k-1)}\,\dz i\wedge\dzb j,
\end{equation}
where we have set $z_{n+(s-1)(r-1)+\lambda}=\xi_\lambda^{(s)}$, $\lambda=1,\dots,r-1$, and to get the expression of this curvature with respect to the coefficients of curvature of $V$ it suffices to perform the recursive substitutions (\ref{ind}) and (\ref{recur}) and to stop with $c_{ij\lambda\mu}^{(0)}=c_{ij\lambda\mu}$.

Thus, Theorem \ref{4.1} is proved.

\subsection{The case of surfaces}\label{surfnotation}

In the case $\rank V=\dim X=2$, we have a nice matrix representation of these formulae. First of all, note that in this case the identities (\ref{recur}) become much simpler:
$$
\begin{aligned}
& \gamma_{ij11}^{(s)}=
\begin{cases}
c_{ij11}^{(s-1)}-c_{ij22}^{(s-1)} & \text{if $1\le i,j\le s+1$} \\
2 & \text{if $i=j=s+2$} \\
\end{cases} \\
& \gamma_{ij22}^{(s)}=
\begin{cases}
c_{ij22}^{(s-1)} & \text{if $1\le i,j\le s+1$} \\
-1 & \text{if $ i=j=s+2$}.
\end{cases}
\end{aligned}
$$
Now, for each $s\ge 1$, let $v_s=v_s^1\,f_1^{(s)}+v_s^2\,f_2^{(s)}$, with $|v_s^1|^2+|v_s^2|^2=1$. Then we have $a_{21}^{(s)}=\bar v_s^1$ and $a_{22}^{(s)}=\bar v_s^2$ and so, for instance $a_{11}^{(s)}=-v_s^2$ and $a_{12}^{(s)}=v_s^1$ would work. It follows that
$$
c^{(s)}_{ij11}=\gamma_{ij11}^{(s)}|v_s^2|^2+\gamma_{ij22}^{(s)}|v_s^1|^2,\quad
c^{(s)}_{ij22}=\gamma_{ij11}^{(s)}|v_s^1|^2+\gamma_{ij22}^{(s)}|v_s^2|^2.
$$
So, if we set
$$
R_s=
\begin{pmatrix}
|v_s^2|^2 & |v_s^1|^2 \\
|v_s^1|^2 & |v_s^2|^2
\end{pmatrix},\quad
T=
\begin{pmatrix}
1 & -1 \\
0 & 1
\end{pmatrix},\quad
C_{ij}^{(s)}=
\begin{pmatrix}
c^{(s)}_{ij11} \\
c^{(s)}_{ij22}
\end{pmatrix}
$$
we have that
$$
C_{ij}^{(s)}=
\begin{cases}
R_s\cdot T\cdot R_{s-1}\cdot T\cdots R_1\cdot T\cdot C_{ij}^{(0)} & \text{if $1\le i,j\le 2$} \\
R_s\cdot T\cdot R_{s-1}\cdot T\cdots R_{i-2}\cdot T\cdot
\begin{pmatrix}
1 \\
-1
\end{pmatrix} & \text{if $3\le i=j\le s+2$}
\end{cases}
$$ 
and we are interested in the second element of the vector $C_{ij}^{(k-1)}$: in fact, in the surface absolute case, formula (\ref{theta}) can be rewritten in the form

\begin{equation}\label{generalformula}
\begin{aligned}
\Theta(\mathcal O_{X_k}(1))= \quad& d\xi^{(k)}\wedge d\bar\xi^{(k)}-\sum_{s=3}^{k+1}c_{ss22}^{(k-1)}\,d\xi^{(s-2)}\wedge d\bar\xi^{(s-2)} \\
& -\sum_{i,j=1}^{2}c_{ij22}^{(k-1)}\,\dz i\wedge\dzb j.
\end{aligned}
\end{equation}

We shall see in the next sections how this explicit formulae can be use to compute Morse-type integrals, in order to obtain the existence of nonzero global section of the bundle of invariant jet differentials.

\section{Holomorphic Morse inequalities for jets}

Let $X$ be a smooth surface and $V=T_X$. From now on we will suppose that $K_X$ is ample, so that we can take as a metric on $X$ the K\"ahler-Einstein one, and we will work always modulo $\varepsilon^{k}$ (this will be possible thanks to Lebesgue's dominated convergence theorem). 

\subsection{The K\"ahler-Einstein assumption}\label{D}

So, let $K_X$ be ample. Then we have a unique hermitian metric $\omega$ on $T_X$, such that $\operatorname{Ricci}(\omega)=-\omega$ and, for this metric,  
$$
\operatorname{Vol}_\omega (X)=\frac{\pi^2}{2}c_1^2(X)>0,
$$  
where
$$
\operatorname{Vol}_\omega(X)\overset{\text{\rm def}}=\int_X\frac{\omega^2}{2!}.
$$
Now, consider the two hermitian matrices $(c_{ij11})$ and $(c_{ij22})$. The K\"ahler-Einstein assumption implies that
$$
(c_{ij11})+(c_{ij22})=(-\delta_{ij})
$$
and so they are simultaneously diagonalizable. Let 
$$
\begin{pmatrix}
\lambda & 0 \\
0 & \mu 
\end{pmatrix}
$$
be a diagonal form for $(c_{ij11})$. Then 
$$
\lambda+\mu=c_{1111}+c_{2211}=c_{1111}+c_{1122}=-1
$$
thanks to the K\"ahler symmetries. If the eigenvalues of $(c_{ij22})$ are $\lambda',\mu'$ then  $\lambda'+\lambda=\mu+\mu'=-1$ thus a diagonal form for $(c_{ij22})$ is
$$  
\begin{pmatrix}
\mu & 0 \\
0 & \lambda 
\end{pmatrix}.
$$
As a consequence, for $\alpha,\beta\in\mathbb C$, the eigenvalues of the matrix $\alpha(c_{ij11})+\beta(c_{ij22})$ are $\alpha\lambda+\beta\mu$ and $\alpha\mu+\beta\lambda$ and so
\begin{equation}\label{det}
\begin{aligned}
\det(\alpha(c_{ij11})+\beta(c_{ij22})) &= (\alpha\lambda+\beta\mu)(\alpha\mu+\beta\lambda) \\
&= \alpha\beta(\lambda^2+\mu^2)+\lambda\mu(\alpha^2+\beta^2) \\
&=\alpha\beta[(\lambda+\mu)^2-2\lambda\mu]+\lambda\mu(\alpha^2+\beta^2) \\
&=\alpha\beta+\lambda\mu(\alpha-\beta)^2
\end{aligned}
\end{equation}
and
\begin{equation}\label{tr}
\begin{aligned}
\text{\rm tr}(\alpha(c_{ij11})+\beta(c_{ij22})) &= (\alpha\lambda+\beta\mu)+(\alpha\mu+\beta\lambda) \\
&= (\alpha+\beta)(\lambda+\mu)\\
&=-(\alpha+\beta).
\end{aligned}
\end{equation}

For $k=1$, the curvature of $\mathcal O_{X_1}(1)$ is simply
$$
\Theta(\mathcal O_{X_1}(1))=d\xi^{(1)}\wedge d\bar\xi^{(1)}-\sum_{i,j=1}^2c_{ij22}\,dz_i\wedge d\bar z_j
$$
and so
$$
\left(\frac{i}{2\pi}\Theta(\mathcal O_{X_1}(1))\right)^3=3!\left(\frac{i}{2\pi}\right)^3 D\,dz_1\wedge d\bar z_1\wedge\cdots\wedge d\xi^{(1)}\wedge d\bar\xi^{(1)},
$$
where we have set $D\overset{\text{\rm def}}=\lambda\mu$, which is, of course, a function $X_1\to\mathbb R$.
In particular, 
$$
\begin{aligned}
\int_{X_1}\left(\frac{i}{2\pi}\right)^3 D\,dz_1\wedge d\bar z_1\wedge\cdots & \wedge d\xi^{(1)}\wedge d\bar\xi^{(1)} \\ &=\frac 16\int_{X_1}\left(\frac{i}{2\pi}\Theta(\mathcal O_{X_1}(1))\right)^3\\ &= \frac 16(c_1^2(X)-c_2(X)),
\end{aligned}
$$
in fact, this integral over $X_1$ is just the top self-intersection of $c_1(\mathcal O_{X_1}(1))$, and this is easily seen to be $c_1^2(X)-c_2(X)$ by means of exact sequences (\ref{ses1}) and (\ref{ses2}).

Moreover, we have that
$$
\left(\frac{i}{2\pi}\right)^3 dz_1\wedge d\bar z_1\wedge\cdots\wedge d\xi^{(1)}\wedge d\bar\xi^{(1)}=\pi_{0,1}^*\left(\frac{1}{\pi^2}dV_\omega\right)\wedge\left(\frac{i}{2\pi}\Theta(\mathcal O_{X_1}(1))\right),
$$
so that
$$
\begin{aligned}
\int_{X_1}\left(\frac{i}{2\pi}\right)^3 dz_1\wedge d\bar z_1\wedge & \cdots\wedge d\xi^{(1)}\wedge d\bar\xi^{(1)} \\ &=\int_{X_1}\pi_{0,1}^*\left(\frac{1}{\pi^2}dV_\omega\right)\wedge\left(\frac{i}{2\pi}\Theta(\mathcal O_{X_1}(1))\right) \\
&= \frac 12 c_1^2(X)
\end{aligned}
$$
by Fubini. 

\subsection{A \lq\lq negative\rq\rq{} example: quotients of the ball}

Here, we wish to make an example to clarify why, if we deal with smooth metrics, we have to use the relatively nef weighted line bundles introduced above.

Suppose you want to show, using just $\mathcal O_{X_k}(1)$, the existence of global $k$-jet differentials on a surface $X$. From our point of view, a good possible \lq\lq test\rq\rq{} case is when $X$ is a compact unramified quotient of the unit ball $\mathbb B_2\subset\mathbb C^2$; surfaces which arise in this way are K\"ahler-Einstein, hyperbolic and with ample cotangent bundle: the best one can hope (these surfaces have even lots of symmetric differentials).

So, let $\mathbb B_2=\{z\in\mathbb C^2\mid |z|<1\}$ endowed with the Poincar\'e metric
$$
\begin{aligned}
\omega_P&=-\frac{i}{2}\partial\bar\partial\log(1-|z|^2)\\
&=\frac{i}{2}\biggl(\frac{dz\otimes d\bar z}{1-|z|^2}+\frac{|\langle dz,z\rangle|^2}{(1-|z|^2)^2}\biggr).
\end{aligned}
$$
Consider a  compact unramified quotient $X=\mathbb B_2/\Gamma$ with the quotient metric, say $\omega$.
Then, $\omega$ has constant curvature; in particular, the function $D\colon X_1\to\mathbb R$ we defined in \S\ref{D} is constant. 

This constant can be quite easily directly computed by hands. Here, we shall compute it as a very simple application of the celebrated Bogomolov-Miyaoka-Yau inequality $c_1^2\le 3\,c_2$ for surfaces of general type with ample canonical bundle, which says moreover that the equality holds if and only if the surface is a quotient of the ball $\mathbb B_2$.  

Using computations made in \S\ref{D}, we have
$$
\begin{aligned}
\frac{1}{6}\bigl(c_1(X)^2-c_2(X))&=\int_{X_1}\left(\frac{i}{2\pi}\right)^3 D\,dz_1\wedge d\bar z_1\wedge\cdots  \wedge d\xi^{(1)}\wedge d\bar\xi^{(1)}\\ &=D \int_{X_1}\left(\frac{i}{2\pi}\right)^3 dz_1\wedge d\bar z_1\wedge\cdots\wedge d\xi^{(1)}\wedge d\bar\xi^{(1)} \\
&=\frac{1}{2}\,c_1(X)^2\,D,
\end{aligned}
$$
so that, making the substitution $c_1(X)^2=3\,c_2(X)$, we find $D\equiv 2/9$. 

Now, a somewhat tedious computation of the $1$-index set of our curvatures, leads to the following result for the \lq\lq Morse\rq\rq{} integrals for $\mathcal O_{X_k}(1)$ and low values of $k$, using the new information about $D$.

\begin{itemize}

\item[$\boxed{k=1}$] We have already done this integral in \S\ref{D}: in this special case it gives $\frac 23\,c_1(X)^2>0$ and so the existence of $1$-jet differentials.

\item[$\boxed{k=2}$] In this case (the line bundle is no longer relatively nef) we don't have the equality $(X_2,\le 1)=X_2$ and so we have to determine the open set $(X_2,\le 1)$. This is an easy matter: using notations of \S\ref{surfnotation} and setting moreover $|v_1^1|^2=x$, $0\le x\le 1$, one sees from the expression of the curvature that
$$
(X_2,\le 1)=\biggl\{0<x<\frac 23\biggr\},
$$
since the trace of the \lq\lq horizontal\rq\rq{} part is always positive for $k=2$. Then we have
$$
\begin{aligned}
\int_{(X_2,\le 1)}& \left(\frac{i}{2\pi}\Theta(\mathcal O_{X_2}(1))\right)^4  \\
& =4!\left(\frac{i}{2\pi}\right)^4\int_{(X_2,\le 1)}(1-3x)\biggl(-\frac 13 x+\frac 29\biggr)\,dz_1\wedge\cdots\wedge d\bar\xi^{(2)}\\
&= 4!\left(\frac{i}{2\pi}\right)^3\int_{X_1}dz_1\wedge\cdots\wedge d\bar\xi^{(1)}\int_0^{2/3}(1-3x)\biggl(-\frac 13 x+\frac 29\biggr)\,dx\\
&=4!\left(\frac{i}{2\pi}\right)^3\int_{X_1}\frac 2{81}\,dz_1\wedge\cdots\wedge d\bar\xi^{(1)}\\
&=\frac 8{27}\,c_1(X)^2>0,
\end{aligned}
$$
where we make the substitution in the $\xi^{(2)}$-complex plane
$$
\frac{i}{2\pi}d\xi^{(2)}\wedge d\overline\xi^{(2)}\mapsto\frac{dx\,d\vartheta}{2\pi}.
$$
Hence the existence of $2$-jet differentials (the optimal attended result should be $10/27\,c_1(X)^2$, by replacing $c_2(X)=1/3\,c_1(X)^2$ in the expression of the leading term of the Euler characteristic $\chi(E_{2,m}T^*_X)$ of the bundle of invariant $2$-jet differentials on $X$, see \cite{Dem97}).

\item[$\boxed{k=3}$] Here the situation becomes much more involved. Several computations (which can be found in our PhD thesis \cite{DivPhD}) give
$$
\int_{X_3(\le 1,\mathcal O_{X_3}(1))}\biggl(\frac{i}{2\pi}\Theta(\mathcal O_{X_3}(1))\biggr)^5=-\underbrace{\frac{715933}{1944000}}_{\simeq\,0,37}\,c_1(X)^2<0,
$$
and so we are not able to check the existence of $3$-jet differentials by this method.
\end{itemize}
Here are some considerations. 

First, the value of the above integrals is, at least in these first cases, decreasing while morally one should expect an increasing sequence (the existence of $k$-jet differentials implies obviously the existence of $(k+1)$-jet differentials).

Second, we suspect that, in fact, this sequence continues to be non-increasing in general, since going up with $k$, adds more and more regions of negativity along the fiber direction ($\mathcal O_{X_k}(1)$ is not relatively positive over $X$, for $k\ge 2$). Moreover, recall that we are working here on a quotient of the ball, so that we had the most favorable \lq\lq horizontal\rq\rq{} contribution in terms of positivity: thus, the problem really relies in the fibers direction.

From these considerations, we deduce that to get a Green-Griffiths type result about asymptotic (on $k$) existence of section, we are naturally led to study either the smooth relatively nef line case (weighted line bundles $\mathcal O_{X_k}(\mathbf a)$), or to leave the \lq\lq smooth world\rq\rq{} and to study singular hermitian metrics on $\mathcal O_{X_k}(1)$ which reflects the relative base locus of this bundle.   

The rest of this paper will be devoted to the first of these two different approaches.

\subsection{Minimal surfaces of general type}

If we relax the hypothesis on the canonical bundle of the surface $X$, and we just take it to be big and nef, then our previous computation gives the same results. 

To see this, it suffices to select an ample class $A$ on $X$ and, for every $\varepsilon>0$, to solve the \lq\lq approximate\rq\rq{} K\"ahler-Einstein equation $\operatorname{Ricci}(\omega)=-\omega+\delta\,\Theta(A)$ (the existence of such a metric $\omega$ on $T_X$ is a well-known consequence of the theory of Monge-Amp\`ere equations).

Once we have such a metric we just observe that, with the notations of this section, we have $\lambda+\mu=-1+O(\delta)$, so that 
$$
\det(\alpha(c_{ij11})+\beta(c_{ij22}))=\alpha\beta(1+O(\delta))+\lambda\mu(\alpha^2-\beta^2)
$$
and
$$
\operatorname{tr}(\alpha(c_{ij11})+\beta(c_{ij22}))=-(\alpha+\beta)(1-O(\delta)).
$$
It is then clear that, our integral computation will now have a final error term which is in fact a $O(\delta)$, and thus we obtain the same results, by letting $\delta$ tend to zero.

\section{Proof of Theorem \ref{4.2}}

In this section we compute explicitly the Chern curvature of the weighted line bundles $\mathcal O_{X_k}(\mathbf a)$ on a surface $X$ and we find conditions for them to be relatively positive. Next, thanks to holomorphic Morse inequalities, we study the consequences of positive self-intersection and finally we prove Theorem \ref{4.2}.   

\subsection{Curvature of weighted line bundles}

We recall some notations and formulae. Let $v_s=v_s^1\,f_1^{(s)}+v_s^2\,f_2^{(s)}\in V_s$, with $|v_s^1|^2+|v_s^2|^2=1$ and set $x_s=|v^1_s|^2$, $0\le x_s\le 1$. Then, if
$$
R_s=\begin{pmatrix} 1-x_s & x_s \\ x_s & 1-x_s\end{pmatrix},\quad T=\begin{pmatrix} 1 & -1 \\ 0 & 1 \end{pmatrix}
$$
and
$$
R_p\cdot T\cdots R_q\cdot T=\begin{pmatrix} \delta_{p,q} & \gamma_{p,q} \\ \beta_{p,q} & \alpha_{p,q}\end{pmatrix},\quad p\ge q\ge 1,
$$
where $\alpha_{p,q},\beta_{p,q},\gamma_{p,q}$ and $\delta_{p,q}$ are functions of $(x_q,\dots,x_p)$, we have that, for $k\ge 2$,
$$
\begin{aligned}
\Theta(\mathcal O_{X_k}(1))=\,& d\xi^{(k)}\wedge d\bar\xi^{(k)}+\sum_{s=1}^{k-1}(\alpha_{k-1,s}-\beta_{k-1,s})\,d\xi^{(s)}\wedge d\bar\xi^{(s)} \\
& +\sum_{i,j=1}^2(-\beta_{k-1,1}\,c_{ij11}-\alpha_{k-1,1}\,c_{ij22})\,dz_i\wedge d\bar z_j.
\end{aligned}
$$
More generally, for $\mathbf a=(a_1,\dots,a_k)\in\mathbb Z^{k}$ (or possibly $\in\mathbb R^k$), we have

\begin{equation}\label{curvweight}
\begin{aligned}
\Theta(\mathcal O_{X_k}(a_1,\dots,a_k))&=a_k\,d\xi^{(k)}\wedge d\bar\xi^{(k)}+\sum_{s=1}^{k-1}\left(\sum_{j=s}^{k-1}a_{j+1}\,y_{j,s}+a_s\right)d\xi^{(s)}\wedge d\bar\xi^{(s)}\\
&\quad+\sum_{i,j=1}^2\left(\underbrace{\sum_{\ell=0}^{k-1}-a_{\ell+1}\beta_{\ell,1}\,c_{ij11}-a_{\ell+1}\alpha_{\ell,1}\,c_{ij22}}_{\overset{\text{\rm def}}=\,A_{ij}(a_1,\dots,a_k)}\right)dz_i\wedge d\bar z_j,
\end{aligned}
\end{equation}

where $y_{p,q}(x_q,\dots,x_p)\overset{\text{\rm def}}=\alpha_{p,q}-\beta_{p,q}$ (we also set formally $\alpha_{0,1}=\beta_{0,1}=1$). Observe that, for the $(2\times 2)$-matrix $(A_{ij})$, we have
$$
\tr(A_{ij})=\sum_{s=0}^{k-1}a_{s+1}\,w_{s,1},\quad w_{p,q}(x_q,\dots,x_p)\overset{\text{\rm def}}=\alpha_{p,q}+\beta_{p,q},
$$ 
thanks to the K\"ahler-Einstein assumption and formula (\ref{tr}).

Now, define $\theta_s^k=\theta_s^k(x_s,\dots,x_{k-1})$ to be the function given by
$$
(x_s,\dots,x_{k-1})\mapsto\sum_{j=s}^{k-1}a_{j+1}\,y_{j,s}+a_s.
$$
This is the $s$-th \lq\lq vertical\rq\rq{} eigenvalue of the weighted curvature. 

\begin{remark}\label{harmonic}
As the $\theta_s^k$'s are linear combinations of the $y_{j,s}$'s, we have that they all are of degree one in each variable. Hence they and their restriction to each edge of the cube $[0,1]^{k-s}$ are harmonic.
In particular they attain their minimum on some vertex of this cube.
\end{remark}

In $\mathbb R^k$, define the closed convex cone 
$$
\mathfrak N=\biggl\{\mathbf a\in\mathbb R^k\mid a_j\ge 2\sum_{\ell=j+1}^{k}a_\ell,\,\forall j=1,\dots,k-1\,\,\text{and}\,\, a_k\ge 0\biggr\}.
$$
We have the following three lemmas. 

\begin{lemma}\label{positiveeigenvalues}
The functions $\theta_s^k$ are positive if (and only if) $\mathbf a\in\overset\circ{\mathfrak N}$.
\end{lemma}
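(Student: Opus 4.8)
\emph{Strategy.} The plan is to reduce, via Remark~\ref{harmonic}, to checking positivity of each $\theta_s^k$ only at the vertices of the cube $[0,1]^{k-s}$, and then to make the linear algebra transparent by a change of variables on the weight $\mathbf a$ that turns the open cone $\overset{\circ}{\mathfrak N}$ into the positive orthant. At a vertex $\mathbf x\in\{0,1\}^{k-s}$ every factor $R_\ell T$ entering the products $R_pT\cdots R_qT$ equals either $T$ (if $x_\ell=0$, since then $R_\ell=\mathrm{Id}$) or $PT$ with $P=\left(\begin{smallmatrix}0&1\\1&0\end{smallmatrix}\right)$ (if $x_\ell=1$). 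Writing $q_i=\left(\begin{smallmatrix}\bar u_i\\ u_i\end{smallmatrix}\right):=(R_{s+i-1}T)\cdots(R_sT)\left(\begin{smallmatrix}-1\\1\end{smallmatrix}\right)$, with $q_0=\left(\begin{smallmatrix}-1\\1\end{smallmatrix}\right)$, one has $u_i=\alpha_{s+i-1,s}-\beta_{s+i-1,s}=y_{s+i-1,s}$, hence $\theta_s^k=a_s+\sum_{i=1}^{k-s}a_{s+i}\,u_i$; moreover $q_i=Tq_{i-1}$, i.e. $(\bar u_i,u_i)=(\bar u_{i-1}-u_{i-1},\,u_{i-1})$, when $x_{s+i-1}=0$, and $q_i=PT\,q_{i-1}$, i.e. $(\bar u_i,u_i)=(u_{i-1},\,\bar u_{i-1}-u_{i-1})$, when $x_{s+i-1}=1$.

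\emph{Change of weights.} Set $b_j=a_j-2\sum_{\ell=j+1}^k a_\ell$ for $j<k$ and $b_k=a_k$, so that $\overset{\circ}{\mathfrak N}=\{\mathbf b>0\}$; a descending induction gives the inverse formula $a_j=b_j+\sum_{m\ge1}2\cdot3^{m-1}b_{j+m}$ (with $b_m=0$ for $m>k$). Substituting into $\theta_s^k=a_s+\sum_i a_{s+i}u_i$ one gets, at each fixed vertex, $\theta_s^k=\sum_{m=0}^{k-s}C_m\,b_{s+m}$ with $C_0=1$ and $C_m=\sum_{i=0}^m u_i\,\kappa_{m-i}$, where $\kappa_0=1$, $\kappa_l=2\cdot3^{l-1}$ and $u_0:=1$. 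Since $\kappa_l=3\kappa_{l-1}$ for $l\ge2$ while $\kappa_1-3\kappa_0=-1$, this telescopes to the clean recursion
$$C_m=3C_{m-1}+(u_m-u_{m-1})=\begin{cases}3C_{m-1}&\text{if }x_{s+m-1}=0,\\[1mm] 3C_{m-1}+\bar u_{m-1}-2u_{m-1}&\text{if }x_{s+m-1}=1.\end{cases}$$
Evaluating $\theta_j^k$ at the vertex $x_j=1$, $x_{j+1}=\dots=x_{k-1}=0$ gives $u_1=-2$ and then $u_m=-2$ for all $m$ (the factor $T$ fixes the lower entry), whence $\theta_j^k=a_j-2\sum_{\ell>j}a_\ell=b_j$ there, while $\theta_k^k=a_k=b_k$. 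Thus if all the $\theta_s^k$ are positive then every $b_j>0$, i.e. $\mathbf a\in\overset{\circ}{\mathfrak N}$; this settles the ``only if''.

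\emph{The ``if'' direction.} By Remark~\ref{harmonic} it suffices to show each vertex value $\theta_s^k=\sum_m C_m\,b_{s+m}$ is $>0$ when all $b_j>0$; as $C_0=1$, it is enough to prove $C_m\ge0$ for all $m$ at every vertex. I would do this by induction on $m$ with the strengthened hypothesis
$$C_m\ge0,\qquad C_m+\bar u_m\ge0,\qquad C_m-u_m\ge0,$$
which holds for $m=0$ (the triple $(C_0,u_0,\bar u_0)$ is $(1,1,-1)$) and is stable under both transitions; for instance, when $x_{s+m-1}=1$ one has $C_m=(C_{m-1}+\bar u_{m-1})+2(C_{m-1}-u_{m-1})\ge0$, $C_m-u_m=2C_{m-1}+(C_{m-1}-u_{m-1})\ge0$ and $C_m+\bar u_m=C_{m-1}+(C_{m-1}+\bar u_{m-1})+(C_{m-1}-u_{m-1})\ge0$, and the case $x_{s+m-1}=0$ is analogous and easier. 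This completes the argument; as a byproduct it yields the explicit expansion $\theta_s^k=\sum_m C_m\,b_{s+m}$ with nonnegative integer coefficients $C_m$.

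\emph{Main obstacle.} The only delicate point is this last induction. The naive bound $|u_i|\le\|PT\|^i$ (growth $\sim\phi^i$) is useless, because an individual coefficient $C_m$ is genuinely \emph{not} controlled by ``$2\sum$ of the later weights'' — the cancellations among the $u_i$'s matter — so one must hit upon the three-term invariant above, which is precisely what the two transition matrices $T$ and $PT$ propagate and which encodes why the factor $2$, rather than $3$ or anything larger, is the correct one in the definition of $\mathfrak N$.
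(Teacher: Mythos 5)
Your proof is correct, and it takes a genuinely different route from the paper's. Both arguments reduce to checking positivity at the vertices of the cube via Remark~\ref{harmonic}, but the paper then runs a top-down induction on $k$: from the transition rule $y_{j+1,1}(\star,0)=y_{j,1}(\star)$, $y_{j+1,1}(\star,1)=-1-2y_{j,1}(\star)-\sum_{h<j}y_{h,1}(\star)$ it shows that fixing the last vertex coordinate rewrites $\theta_1^k(\star,\epsilon;\mathbf a)$ as $\theta_1^{k-1}(\star;\mathbf b)$ for a modified weight $\mathbf b$ (namely $(a_1,\dots,a_{k-2},a_{k-1}+a_k)$ when $\epsilon=0$, and $(a_1-a_k,\dots,a_{k-2}-a_k,a_{k-1}-2a_k)$ when $\epsilon=1$), verifies that $\mathbf b$ stays in $\overset\circ{\mathfrak N}$, and iterates down to $k=1$; the shift $\theta_s^k\mapsto\theta_1^{k-s+1}$ is used to reduce to $s=1$. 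You instead linearize the cone itself via $b_j=a_j-2\sum_{\ell>j}a_\ell$, reducing the lemma to the nonnegativity of the coefficients $C_m$ in the expansion $\theta_s^k=\sum_m C_m\,b_{s+m}$, and prove this by a bottom-up induction on $m$ driven by the two transition matrices $T$ and $PT$ and the three-term invariant $C_m\ge0$, $C_m+\bar u_m\ge0$, $C_m-u_m\ge0$. (Your product is assembled by attaching the new factor $R_{s+i-1}T$ at the inner, small-index end each step, whereas the paper strips the outermost factor $R_{k-1}T$; these are dual traversals of the same matrix product, and your weight substitution is exactly the composite of all the paper's one-step weight modifications.) Your version has some extra dividends: it yields the quantitative lower bound $\theta_s^k\ge b_s$ on the whole cube; it makes structurally visible why the constant $2$ in the definition of $\mathfrak N$ is the correct threshold, since $\kappa_1-3\kappa_0=-1$ is precisely what makes the telescoping $C_m-3C_{m-1}=u_m-u_{m-1}$ and the invariant close; and, by specializing to the vertex $x_s=1$, $x_{s+1}=\cdots=x_{k-1}=0$, it actually proves the ``only if'' half, which the paper states parenthetically but never argues (with the understanding that the family of vertical eigenvalues includes $\theta_k^k=a_k=b_k$, otherwise only $b_1,\dots,b_{k-1}>0$ follows).
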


\begin{proof}
First of all, observe that the structure of the four functions $\alpha_{p,q}$, $\beta_{p,q}$, $\gamma_{p,q}$ and $\delta_{p,q}$ (and hence $y_{p,q}$) depends only on $p-q$. Now, it is immediate to check by induction that we have the following expression for the $\gamma_{p,q}$'s and the $\delta_{p,q}$'s:
$$
\gamma_{p,q}=-\sum_{h=q}^p\alpha_{h,q}\quad\text{\rm and}\quad\delta_{p,q}=1-\sum_{h=q}^p\beta_{h,q}.
$$
Next, observe that, for all $s\ge 1$,
$$
R_s(0)\cdot T=\begin{pmatrix} 1 & -1 \\ 0 & 1 \end{pmatrix}\quad\text{\rm and}\quad
R_s(1)\cdot T=\begin{pmatrix} 0 & 1 \\ 1 & -1 \end{pmatrix},
$$
so that, if $j\ge 1$, $y_{j+1,1}(\bullet,0)=y_{j,1}(\bullet)$ and $y_{j+1,1}(\bullet,1)=-1-2\,y_{j,1}(\bullet)-\sum_{h=1}^{j-1}y_{h,1}(\bullet)$; moreover, $y_{1,1}(0)=1$ and $y_{1,1}(1)=-2$.

The lemma is clearly true for $k=1$, so we proceed by induction on $k$. We have, for $s\ge 2$,
$$
\begin{aligned}
\theta^k_s & = \theta^k_s(x_s,\dots,x_{k-1};\mathbf a)= a_s+\sum_{j=s}^{k-1}a_{j+1}\,y_{j,s} \\
&=a_s+\sum_{j=s}^{k-1}a_{j+1}\,y_{j-s+1,1}=\theta^{k-s+1}_1(x_s,\dots,x_{k-1};\mathbf b),
\end{aligned}
$$
where $\mathbf b=(a_s,\dots,a_k)\in\mathbb R^{k-s+1}$ is again in the corresponding $\overset\circ{\mathfrak N}$: it remains then to show that, for a general $k\ge 2$, the lemma is true for $\theta^k_1$. Recall that, by Remark \ref{harmonic}, it suffices to check positivity on the vertices of the cube $[0,1]^{k-1}$. Let $\star$ denote an arbitrary sequence of $0$ and $1$ of length $k-2$: we shall treat the two cases $(\star,0)$ and $(\star,1)$ separately. For the first one, we have
$$
\begin{aligned}
\theta^k_1(\star\,,0;\mathbf a) &=a_1+\sum_{j=1}^{k-1}a_{j+1}\,y_{j,1}(\star,0) \\
&=a_1+\sum_{j=1}^{k-2}a_{j+1}\,y_{j,1}(\star)+a_k\,y_{k-1,1}(\star,0)\\
&=a_1+\sum_{j=1}^{k-2}a_{j+1}\,y_{j,1}(\star)+a_k\,y_{k-2,1}(\star)\\
&=a_1+\sum_{j=1}^{k-3}a_{j+1}\,y_{j,1}(\star)+(a_{k-1}+a_k)\,y_{k-2,1}(\star)\\
&=\theta^{k-1}_1(\star\,;\mathbf b')
\end{aligned}
$$
for a new $\mathbf b'\in\mathbb R^{k-1}$ which is easily seen to be in the corresponding $\overset\circ{\mathfrak N}$. Similarly, for the second case, we have
$$
\begin{aligned}
\theta^k_1(\star\,,1;\mathbf a) &=a_1+\sum_{j=1}^{k-1}a_{j+1}\,y_{j,1}(\star,1) \\
&=a_1+\sum_{j=1}^{k-2}a_{j+1}\,y_{j,1}(\star)+a_k\,y_{k-1,1}(\star,1)\\
&=a_1+\sum_{j=1}^{k-2}a_{j+1}\,y_{j,1}(\star)+a_k\left(-\sum_{h=1}^{k-3}y_{h,1}(\star)-2\,y_{k-2,1}(\star)-1\right)\\
\end{aligned}
$$
$$
\begin{aligned}
\qquad
&=(a_1-a_k)+\sum_{j=1}^{k-3}(a_{j+1}-a_k)\,y_{j,1}(\star)+(a_{k-1}-2a_k)\,y_{k-2,1}(\star)\\
&=\theta^{k-1}_1(\star\,;\mathbf b''),
\end{aligned}
$$
where again $\mathbf b''\in\mathbb R^{k-1}$ is a new weight which satisfies the (strict) inequalities defining $\mathfrak N$. The lemma is proved.
\end{proof}

The reason why we choose $\mathbf a$ in the interior of the cone $\mathfrak N$, is that with such a choice the vertical eigenvalues of the curvature $\Theta(\mathcal O_{X_k}(\mathbf a))$ are positive for all small $\varepsilon^{(k)}$.

\begin{remark}
The above lemma says in particular, that if $\mathbf a\in\mathfrak N$, then for all $\varepsilon>0$, we can endow $\mathcal O_{X_k}(\mathbf a)$ with a smooth hermitian metric $h_k$ (namely, the one we are working with) such that $\Theta_{h_k}(\mathcal O_{X_k}(\mathbf a))\ge-\varepsilon\,\omega$ along the fiber of $X_k\to X$, for some hermitian metric $\omega$ on $T_{X_k}$ (recall that we are always working modulo $\varepsilon^{(k)}$). In particular, the cone $\mathfrak N$ is contained in the cone of relatively nef (over $X$) line bundles.
\end{remark}

\begin{lemma}\label{positivetrace}
If $\theta_s^k\ge 0$ for all $s=1,\dots,k-1$, and $\mathbf a\in\mathbb N^{k}$ with at least one of the $a_j$'s is strictly positive, then $\tr(A_{ij})> 0$ in the cube $[0,1]^{k-1}$.
\end{lemma}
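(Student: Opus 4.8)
The plan is to reduce the inequality to the vertices of the cube $[0,1]^{k-1}$ and then to argue by induction on $k$, in close analogy with the proof of Lemma~\ref{positiveeigenvalues}. Recall from (\ref{tr}) and the K\"ahler--Einstein assumption that $\tr(A_{ij})=\sum_{s=0}^{k-1}a_{s+1}\,w_{s,1}$, where $w_{s,1}=\alpha_{s,1}+\beta_{s,1}$ is the sum of the entries in the bottom row of $R_s\cdot T\cdots R_1\cdot T$ and does not involve $\mathbf a$. Since each variable $x_i$ occurs in the single factor $R_i\cdot T$, every $w_{s,1}$ --- hence $\tr(A_{ij})$ --- is of degree one in each variable, so by the observation used in Remark~\ref{harmonic} it attains its minimum over the cube at a vertex; thus it is enough to estimate $\tr(A_{ij})$ at vertices. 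Moreover the hypotheses force $\mathbf a\in\mathfrak N$: evaluating $\theta^k_j$ at the vertex whose first coordinate is $1$ and all of whose other coordinates are $0$ --- at which, by the recursion $y_{p+1,1}(\bullet,0)=y_{p,1}(\bullet)$ from that proof, every $y_{p,1}$ takes the value $y_{1,1}(1)=-2$ --- one gets $a_j\ge2\sum_{\ell>j}a_\ell$, while $a_k\ge0$ since $\mathbf a\in\mathbb N^k$; being nonzero, $\mathbf a$ then satisfies $a_1\ge1$. It therefore suffices to prove the slightly stronger statement that for every $\mathbf a\in\mathfrak N\subseteq\mathbb R^k$ and every vertex $v$ of $[0,1]^{k-1}$ one has $\tr(A_{ij})(v)\ge a_1$.

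The computation driving the induction is the one already carried out for the $y_{j,1}$'s in the proof of Lemma~\ref{positiveeigenvalues}: from $R_s(0)\cdot T=T$, $R_s(1)\cdot T=\left(\begin{smallmatrix}0&1\\1&-1\end{smallmatrix}\right)$ and the identities $\gamma_{p,1}=-\sum_{h=1}^{p}\alpha_{h,1}$, $\delta_{p,1}=1-\sum_{h=1}^{p}\beta_{h,1}$ established there, one reads off, for $j\ge1$,
$$
w_{j+1,1}(\bullet,0)=w_{j,1}(\bullet),\qquad w_{j+1,1}(\bullet,1)=1-2\,w_{j,1}(\bullet)-\sum_{h=1}^{j-1}w_{h,1}(\bullet),
$$
together with $w_{1,1}(0)=1$ and $w_{1,1}(1)=0$. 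These are exactly the recursions for the $y_{j,1}$'s, except that the additive constant $-1$ has become $+1$ (and $w_{1,1}(1)=0$ replaces $y_{1,1}(1)=-2$); the change of sign will only work in our favour.

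Now run the induction, the cases $k\le2$ being an immediate direct check. Write a vertex of $[0,1]^{k-1}$ as $(\star,x_{k-1})$ with $\star$ a vertex of $[0,1]^{k-2}$. If $x_{k-1}=0$, the first recursion and a rearrangement give $\tr(A_{ij})(\star,0;\mathbf a)=\tr(A_{ij})(\star;\mathbf b')$ with $\mathbf b'=(a_1,\dots,a_{k-2},a_{k-1}+a_k)$, which lies again in $\mathfrak N$, so the inductive hypothesis yields $\tr(A_{ij})(\star,0;\mathbf a)\ge b_1'=a_1$. If $x_{k-1}=1$, the second recursion and a rearrangement give $\tr(A_{ij})(\star,1;\mathbf a)=a_k+\tr(A_{ij})(\star;\mathbf b'')$ with $\mathbf b''=(a_1,\,a_2-a_k,\,\dots,\,a_{k-2}-a_k,\,a_{k-1}-2a_k)$; one checks from the inequalities defining $\mathfrak N$ --- using $a_{k-1}\ge2a_k$ for the last coordinate and $a_j\ge2\sum_{\ell>j}a_\ell$ together with $a_k\ge0$ for the remaining ones --- that $\mathbf b''\in\mathfrak N$, so the inductive hypothesis gives $\tr(A_{ij})(\star;\mathbf b'')\ge b_1''=a_1$, and since $a_k\ge0$ we conclude $\tr(A_{ij})(\star,1;\mathbf a)\ge a_1$. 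This proves the claim, whence $\tr(A_{ij})\ge a_1\ge1>0$ throughout $[0,1]^{k-1}$.

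The step needing most care is the reduction when $x_{k-1}=1$: one has to extract from the recursion the exact reduced weight $\mathbf b''$ and then verify that it still satisfies the --- now possibly non-strict --- inequalities cutting out $\mathfrak N$, which is precisely where the hypotheses $\mathbf a\in\mathfrak N$ and, crucially, $a_k\ge0$ are used; this is the analogue of the delicate $(\star,1)$ case in the proof of Lemma~\ref{positiveeigenvalues}. Everything else --- the reduction to vertices, the two recursions for the $w_{s,1}$'s, and the case $x_{k-1}=0$ --- is routine bookkeeping.
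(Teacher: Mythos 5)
Your proof is correct, but it follows a genuinely different route from the paper's. The paper derives the recursion
$$
w_{p,p}=\tfrac{2+y_{p,p}}{3},\qquad w_{p,j-1}=\tfrac{y_{p,j-1}+2\,w_{p,j}}{3},
$$
solves it in closed form as
$$
w_{p,q}=\Bigl(\tfrac 23\Bigr)^{p-q+1}+\tfrac 13\sum_{\ell=q}^{p}\Bigl(\tfrac 23\Bigr)^{\ell-q}y_{p,\ell},
$$
and then substitutes into $\tr(A_{ij})=\sum_{s}a_{s+1}w_{s,1}$ to obtain
$$
\tr(A_{ij})=\tfrac 23\,a_1+\Bigl(\tfrac 23\Bigr)^{k-1}a_k+\sum_{s=1}^{k-2}\Bigl(\tfrac 23\Bigr)^{s+1}a_{s+1}+\tfrac 13\sum_{\ell=1}^{k-1}\Bigl(\tfrac 23\Bigr)^{\ell-1}\theta^k_\ell,
$$
a manifestly nonnegative sum with at least one strictly positive term; positivity is immediate at every point of the cube, with no reduction to vertices needed. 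You instead mimic the structure of the proof of Lemma~\ref{positiveeigenvalues}: first note $\theta^k_s\ge0$ plus $\mathbf a\in\mathbb N^k$ force $\mathbf a\in\mathfrak N$ (by testing $\theta^k_j$ at the vertex $(1,0,\dots,0)$, which is a nice observation), next reduce by multilinearity to the vertices, and finally run an induction on $k$ using the two vertex recursions for $w_{j+1,1}$, checking in each branch that the reduced weight ($\mathbf b'$ or $\mathbf b''$) stays in $\mathfrak N$. Both arguments are sound. The paper's closed form has the advantage of being reused directly in Lemma~\ref{positivedeterminant} (where one needs $\tr(A_{ij})>\tfrac13\theta^k_1$); your inductive bound $\tr(A_{ij})\ge a_1$ is of a different flavour and does not give that estimate, but for the present lemma it is perfectly adequate and arguably closer in spirit to the surrounding material.
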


\begin{proof}
First of all, we recover the expression of the $w_{p,q}$'s in terms of the $y_{r,s}$'s. We have $w_{p,p}=(2+y_{p,p})/3$ and
$$
\begin{aligned}
w_{p,j-1}& =\alpha_{p,j-1}+\beta_{p,j-1}=x_{j-1}(\beta_{p,j}-\alpha_{p,j})+\alpha_{p,j} \\
& = \alpha_{p,j}-y_{p,j}\,x_{j-1}=\alpha_{p,j}+\frac{y_{p,j-1}+2\beta_{p,j}-\alpha_{p,j}}{3y_{p,j}}\,y_{p,j}\\
&=\frac{y_{p,j-1}+2w_{p,j}}{3},
\end{aligned}
$$
as, $x_{j-1}=-(y_{p,j-1}+2\beta_{p,j}-\alpha_{p,j})/3y_{p,j}$ (this is easily seen from the very definitions).
Then, by induction, we obtain
$$
w_{p,q}=\biggl(\frac 23\biggr)^{p-q+1}+\frac 13\sum_{\ell=q}^{p}\biggl(\frac 23\biggr)^{\ell-q}\,y_{p,\ell}.
$$
Now, $\tr(A_{ij})=\sum_{s=0}^{k-1}a_{s+1}\,w_{s,1}$ and so
$$
\begin{aligned}
\sum_{s=0}^{k-1}a_{s+1}\,w_{s,1} &= a_1+\sum_{s=1}^{k-1}\biggl(\biggl(\frac 23\biggr)^{s}+\frac 13\sum_{\ell=1}^{s}\biggl(\frac 23\biggr)^{\ell-1}\,y_{s,\ell}\biggr) \\
&= a_1+\sum_{s=1}^{k-1}\biggl(\frac 23\biggr)^s\,a_{s+1}+\frac 13\sum_{\ell,s=1}^{k-1}\biggl(\frac 23\biggr)^{\ell-1}a_{s+1}\,y_{s,\ell} \\
&=a_1+\sum_{s=1}^{k-1}\biggl(\frac 23\biggr)^s\,a_{s+1}+\frac 13\sum_{\ell=1}^{k-1}\biggl(\frac 23\biggr)^{\ell-1}(\theta_\ell^k-a_\ell) \\
&=\frac 23\,a_1+\biggl(\frac 23\biggr)^{k-1}a_k+\sum_{s=1}^{k-2}\biggl(\frac 23\biggr)^{s+1}a_{s+1}
+\frac 13\sum_{\ell=1}^{k-1}\biggl(\frac 23\biggr)^{\ell-1}\theta_\ell^k.
\end{aligned}
$$
\end{proof}

\begin{lemma}\label{positivedeterminant}
Let $D\colon X_1\to\mathbb R$ be as in \S\ref{D}. If $\theta_s^k\ge 0$ for all $s=1,\dots,k-1$, $\mathbf a\in\mathbb N^k$ with at least one of the $a_j$'s strictly positive and $D\equiv 2/9$, then $\det(A_{ij})>0$ in the cube $[0,1]^{k-1}$.
\end{lemma}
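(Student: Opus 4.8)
The plan is to reduce the positivity of $\det(A_{ij})$ on the cube $[0,1]^{k-1}$ to the single inequality $3\,\tr(A_{ij})>\theta_1^k$, which then follows at once from the closed-form expression for $\tr(A_{ij})$ already obtained in the proof of Lemma~\ref{positivetrace}. I take $k\ge 2$ (the case $k=1$ is trivial, since then $(A_{ij})=-a_1(c_{ij22})$ and $\det(A_{ij})=D\,a_1^2=\tfrac29\,a_1^2>0$).

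\textbf{Step 1 (from $\det$ to $\tr$).} I would first observe that, by (\ref{curvweight}), the horizontal matrix is of the form
$$
(A_{ij})=u\,(c_{ij11})+v\,(c_{ij22}),\qquad
u=-\sum_{\ell=0}^{k-1}a_{\ell+1}\beta_{\ell,1},\quad
v=-\sum_{\ell=0}^{k-1}a_{\ell+1}\alpha_{\ell,1},
$$
so formulae (\ref{det}) and (\ref{tr})---which rest on the K\"ahler--Einstein relation $(c_{ij11})+(c_{ij22})=(-\delta_{ij})$---give
$$
\det(A_{ij})=uv+D\,(u-v)^2,\qquad \tr(A_{ij})=-(u+v).
$$
Here $u-v=\sum_{\ell=0}^{k-1}a_{\ell+1}y_{\ell,1}$ is exactly the first vertical eigenvalue $\theta_1^k$ (by its defining formula and the initial value $y_{0,1}=1$), while $-(u+v)=\sum_{s=0}^{k-1}a_{s+1}w_{s,1}=\tr(A_{ij})$ is the quantity computed while proving Lemma~\ref{positivetrace}. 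Completing the square, $uv=\tfrac14\bigl[\tr(A_{ij})^2-(\theta_1^k)^2\bigr]$, and substituting $D\equiv 2/9$ (so that $\tfrac29-\tfrac14=-\tfrac1{36}$) would give the crucial identity
$$
\det(A_{ij})=\tfrac14\,\tr(A_{ij})^2-\tfrac1{36}(\theta_1^k)^2
=\tfrac1{36}\bigl(3\,\tr(A_{ij})-\theta_1^k\bigr)\bigl(3\,\tr(A_{ij})+\theta_1^k\bigr).
$$
Since $\theta_1^k\ge 0$ by hypothesis, it then suffices to show $3\,\tr(A_{ij})>\theta_1^k$ on $[0,1]^{k-1}$.

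\textbf{Step 2 (the trace dominates $\theta_1^k$).} Here I would invoke the closed form for $\tr(A_{ij})$ from the proof of Lemma~\ref{positivetrace},
$$
\tr(A_{ij})=\tfrac23\,a_1+\bigl(\tfrac23\bigr)^{k-1}a_k+\sum_{s=1}^{k-2}\bigl(\tfrac23\bigr)^{s+1}a_{s+1}
+\tfrac13\sum_{\ell=1}^{k-1}\bigl(\tfrac23\bigr)^{\ell-1}\theta_\ell^k,
$$
multiply by $3$, and split off the $\ell=1$ summand (whose coefficient is exactly $1$) to write
$$
3\,\tr(A_{ij})=\theta_1^k
+\Bigl(2a_1+3\bigl(\tfrac23\bigr)^{k-1}a_k+3\sum_{s=1}^{k-2}\bigl(\tfrac23\bigr)^{s+1}a_{s+1}\Bigr)
+\sum_{\ell=2}^{k-1}\bigl(\tfrac23\bigr)^{\ell-1}\theta_\ell^k .
$$
On $[0,1]^{k-1}$ the last sum is $\ge 0$ because $\theta_\ell^k\ge 0$ by hypothesis, and the bracketed term is a linear combination of $a_1,\dots,a_k$ with strictly positive coefficients, hence $>0$ as $\mathbf a\in\mathbb N^k$ is nonzero. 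Thus $3\,\tr(A_{ij})>\theta_1^k\ge 0$ everywhere on the cube, both factors in the Step~1 identity are positive, and $\det(A_{ij})>0$.

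\textbf{Main obstacle.} The only substantial point is Step~1, i.e. spotting that $\det(A_{ij})$ collapses to the clean quadratic $\tfrac1{36}\bigl(9\,\tr(A_{ij})^2-(\theta_1^k)^2\bigr)$; this uses the K\"ahler--Einstein relation (through (\ref{det})--(\ref{tr})) together with the precise value $D=2/9$ in an essential way---the coefficient $\tfrac29-\tfrac14=-\tfrac1{36}$ being negative but small relative to $\tfrac14$ is exactly what makes the weak bound $3\,\tr(A_{ij})>\theta_1^k$ (rather than anything stronger) sufficient. Granted the identity, Step~2 is a routine reading of the trace formula of Lemma~\ref{positivetrace} against the hypotheses $\theta_\ell^k\ge 0$ and $\mathbf a\ne 0$.
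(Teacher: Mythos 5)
Your proof is correct and takes essentially the same route as the paper: Diverio defines $\alpha(\mathbf a)$, $\beta(\mathbf a)$ (your $-v$, $-u$ up to sign), obtains exactly the identity $\det(A_{ij})=\tfrac14\tr(A_{ij})^2-\tfrac1{36}(\theta_1^k)^2$ via (\ref{det})--(\ref{tr}) and $D=2/9$, and then cites the closing display of the proof of Lemma~\ref{positivetrace} for the inequality $\tr(A_{ij})>\tfrac13\,\theta_1^k$, which you simply make explicit in Step~2. Your remark that $y_{0,1}=1$ is the correct reading of the formal boundary convention (it is what makes $u-v=\theta_1^k$ and $\tr(A_{ij})=\sum a_{s+1}w_{s,1}$ start with the term $a_1$), even though the paper's text states $\alpha_{0,1}=\beta_{0,1}=1$, which would give $y_{0,1}=0$ and is evidently a misprint.
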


\begin{proof}
Set 
$$
\alpha(\mathbf a)\overset{\text{\rm def}}=a_1+\sum_{\ell=1}^{k-1}a_{\ell+1}\,\alpha_{\ell,1}\quad\text{and}\quad
\beta(\mathbf a)\overset{\text{\rm def}}=a_1+\sum_{\ell=1}^{k-1}a_{\ell+1}\,\beta_{\ell,1}.
$$
Then, formula (\ref{det}) yields 
$$
\begin{aligned}
\det(A_{ij})&=\alpha(\mathbf a)\beta(\mathbf a)+(\alpha(\mathbf a)-\beta(\mathbf a))^2D\\
&=\frac 14(\alpha(\mathbf a)+\beta(\mathbf a))^2-\frac 1{36} (\alpha(\mathbf a)-\beta(\mathbf a))^2
\end{aligned}
$$
But now, we observe that $\alpha(\mathbf a)+\beta(\mathbf a)=\tr(A_{ij})$ and that $\alpha(\mathbf a)-\beta(\mathbf a)=\theta^k_1$; the end of the proof of Lemma \ref{positivetrace} shows that $\tr(A_{ij})>1/3\,\theta^k_1$, so that $\det(A_{ij})>0$.
\end{proof}

\begin{proposition}\label{bigness}
If $\mathbf a\in\mathfrak N$, then the line bundle $\mathcal O_{X_k}(1)$ is big as soon as the top self-intersection $\mathcal O_{X_k}(\mathbf a)^{k+2}$ is positive.
\end{proposition}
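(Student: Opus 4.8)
The plan is to apply Demailly's holomorphic Morse inequalities \cite{Dem85} to the line bundle $L=\mathcal O_{X_k}(\mathbf a)$, endowed with the smooth metric whose curvature is given (modulo $\varepsilon^{(k)}$) by formula (\ref{curvweight}), and then to transfer bigness from $\mathcal O_{X_k}(\mathbf a)$ down to $\mathcal O_{X_k}(1)$. \textbf{Step 1 (reduction to integral weights).} The top self-intersection $\mathbf a\mapsto\mathcal O_{X_k}(\mathbf a)^{k+2}$ is a homogeneous polynomial in $\mathbf a$, hence continuous, and bigness of $\mathcal O_{X_k}(\mathbf a)$ for an integral $\mathbf a$ depends only on the ray $\mathbb R_{>0}\mathbf a$; moreover $\overset\circ{\mathfrak N}$ is non-empty (e.g. $(3^{k-1},\dots,3,1)\in\overset\circ{\mathfrak N}$) and therefore dense in the convex cone $\mathfrak N$. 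Since we shall ultimately only need bigness of $\mathcal O_{X_k}(1)$, and since any point of $\overset\circ{\mathfrak N}$ used below still lies in $\mathfrak N$, we may, after moving $\mathbf a$ slightly into $\overset\circ{\mathfrak N}$, to a nearby rational point with $\mathcal O_{X_k}(\cdot)^{k+2}>0$, and scaling, assume from the outset $\mathbf a\in\overset\circ{\mathfrak N}\cap\mathbb N^k$; in particular $\mathbf a\ne 0$, so at least one $a_j$ is strictly positive.

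\textbf{Step 2 (the $1$-index set is everything).} By (\ref{curvweight}), at every point of $X_k$ the Hermitian form $i\,\Theta(L)$ is block-diagonal: a ``vertical'' diagonal block carrying the $k$ eigenvalues $a_k$ and $\theta^k_s$ ($s=1,\dots,k-1$), and a $2\times 2$ ``horizontal'' Hermitian block $(A_{ij})$. Since $\mathbf a\in\mathfrak N$, Lemma \ref{positiveeigenvalues} gives $\theta^k_s\ge 0$ and $a_k\ge 0$ by definition of $\mathfrak N$, so the vertical block is positive semi-definite; and Lemma \ref{positivetrace} gives $\tr(A_{ij})>0$ on the whole cube $[0,1]^{k-1}$, so the $2\times 2$ block has at most one negative eigenvalue. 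Hence $i\,\Theta(L)$ has at most one negative eigenvalue everywhere, so $X_k(q,L)=\emptyset$ for $q\ge 2$ and $X_k=\Delta\cup X_k(\le 1,L)$. As the top form $\bigl(\tfrac i{2\pi}\Theta(L)\bigr)^{k+2}$ vanishes pointwise on the degeneracy set $\Delta=\{\det(i\,\Theta(L))=0\}$, and as $\tfrac i{2\pi}\Theta(L)$ is a de Rham representative of $c_1(L)$, we get
$$
\int_{X_k(\le 1,L)}\Bigl(\tfrac i{2\pi}\,\Theta(L)\Bigr)^{k+2}=\int_{X_k}\Bigl(\tfrac i{2\pi}\,\Theta(L)\Bigr)^{k+2}=\mathcal O_{X_k}(\mathbf a)^{k+2}>0 .
$$

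\textbf{Step 3 (Morse inequalities and transfer).} Applying the strong Morse inequality with $n=\dim X_k=k+2$, $q=1$ and $E$ trivial, and writing $L^{\otimes m}=\mathcal O_{X_k}(m\mathbf a)$, we obtain
$$
h^0\bigl(X_k,\mathcal O_{X_k}(m\mathbf a)\bigr)\ \ge\ h^0-h^1\ \ge\ \frac{m^{k+2}}{(k+2)!}\,\mathcal O_{X_k}(\mathbf a)^{k+2}-O(m^{k+1}),
$$
so $\mathcal O_{X_k}(\mathbf a)$ is big. Finally, $\mathbf a\in\mathfrak N$ forces $a_1\ge a_2\ge\dots\ge a_k\ge 0$, and for such a weight there is a non-zero morphism of line bundles $\mathcal O_{X_k}(\mathbf a)\to\mathcal O_{X_k}(a_1+\dots+a_k)=\mathcal O_{X_k}(1)^{\otimes(a_1+\dots+a_k)}$ (cf. \cite{Dem97} and the discussion of weighted line bundles in Section 2); being non-zero on the irreducible variety $X_k$, it is injective as a morphism of sheaves, and the same holds for its $m$-th tensor power, so taking global sections gives
$$
h^0\bigl(X_k,\mathcal O_{X_k}(1)^{\otimes m(a_1+\dots+a_k)}\bigr)\ \ge\ h^0\bigl(X_k,\mathcal O_{X_k}(m\mathbf a)\bigr)\ \gtrsim\ m^{k+2}.
$$
Hence $\mathcal O_{X_k}(1)^{\otimes(a_1+\dots+a_k)}$, and therefore $\mathcal O_{X_k}(1)$, is big.

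\textbf{Expected main obstacle.} The only substantial point is Step 2 — the identification of the $1$-index set — which we have reduced to the two structural Lemmas \ref{positiveeigenvalues} and \ref{positivetrace} already at our disposal. The delicate issue there is that those lemmas hold \emph{modulo} $\varepsilon^{(k)}$: one must check that the \emph{strict} inequality $\tr(A_{ij})>0$ (which on the compact cube $[0,1]^{k-1}$ is bounded below by a positive constant) together with the semi-positivity of the vertical eigenvalues (bounded below by a positive constant once $\mathbf a$ is pushed into the open cone $\overset\circ{\mathfrak N}$) survive the $O(\varepsilon^{(k)})$ perturbation produced by the genuine smooth metric, so that $X_k(\le 1,L)=X_k$ remains valid for $\varepsilon^{(k)}$ small enough; the passage to the limit in the Morse integral is then legitimized, as elsewhere in this work, by Lebesgue's dominated convergence theorem.
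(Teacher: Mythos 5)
Your proof is correct and follows essentially the same route as the paper's: reduce to an integral weight in the interior of the cone, invoke Lemmas \ref{positiveeigenvalues} and \ref{positivetrace} to show the curvature of $\mathcal O_{X_k}(\mathbf a)$ has at most one negative eigenvalue pointwise, apply Demailly's Morse inequalities to conclude $\mathcal O_{X_k}(\mathbf a)$ is big, and finally transfer bigness to $\mathcal O_{X_k}(1)$ via the non-trivial morphism $\mathcal O_{X_k}(\mathbf a)\to\mathcal O_{X_k}(|\mathbf a|)$. Your extra attention to the degeneracy set $\Delta$, to the injectivity of the morphism on the irreducible $X_k$, and to the stability under the $O(\varepsilon^{(k)})$ perturbation are correct refinements but do not change the structure of the argument.
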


\begin{proof}
Without loss of generality, we can suppose that $\mathbf a$ is integral and that $\mathbf a\in\overset\circ{\mathfrak N}$. Then Lemma \ref{positiveeigenvalues} ensures that all the \lq\lq vertical\rq\rq{} eigenvalues are positive: in this case, thanks to Lemma \ref{positivetrace}, we conclude that the curvature of $\mathcal O_{X_k}(\mathbf a)^{k+2}$ can have at most one negative \lq\lq horizontal\rq\rq{} eigenvalue. 

Thus, $X_k(\le 1,\mathcal O_{X_k}(\mathbf a))=X_k$ and so
$$
\begin{aligned}
\int_{X_k(\le 1,\mathcal O_{X_k}(\mathbf a))}\biggl(\frac i{2\pi}\Theta(\mathcal O_{X_k}(\mathbf a))\biggr)^{k+2}&=\int_{X_k}\biggl(\frac i{2\pi}\Theta(\mathcal O_{X_k}(\mathbf a))\biggr)^{k+2}\\ & \\ &=\mathcal O_{X_k}(\mathbf a)^{k+2}.
\end{aligned}
$$
If $\mathcal O_{X_k}(\mathbf a)^{k+2}>0$, then, by Demailly's holomorphic Morse inequalities, $\mathcal O_{X_k}(\mathbf a)$ is big and so is $\mathcal O_{X_k}(1)$ (recall that if $\mathbf a\in\mathbb N^k$, then there is a non-trivial morphism $\mathcal O_{X_k}(\mathbf a)\to\mathcal O_{X_k}(|\mathbf a|)$).
\end{proof}

\subsection{End of the proof}

Let $u_j=c_1(\mathcal O_{X_j}(1))$ be the first Chern class of the anti-tautological line bundle on $X_j$. Define the (real) polynomials $F_k,G_k\colon\mathbb R^k\to\mathbb R$ by
$$
(a_1\,u_1+\cdots+a_k\,u_k)^{k+2}=F_k(\mathbf a)\,c_1(X)^2-G_k(\mathbf a)\,c_2(X).
$$
Observe that these two polynomials do not depend on the particular surface $X$, but only on the relative structure of the fibration $X_k\to X$, which is universal.

\begin{lemma}\label{3FG}
Suppose that for each $k\ge 1$, there exists a minimal surface of general type $X$ such that $\mathcal O_{X_k}(1)$ is not big. Then, if $\mathbf a\in\mathfrak N$, we have the inequalities
$$
3F_k(\mathbf a)\ge G_k(\mathbf a)\ge 0
$$
and $G_k\not\equiv 0$.
\end{lemma}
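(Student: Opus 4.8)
The plan is to exploit the two positivity criteria already established (Lemma \ref{positivetrace} and Lemma \ref{positivedeterminant}) together with the Bogomolov--Miyaoka--Yau inequality, applied to a quotient of the ball where $D\equiv 2/9$. First I would fix $k\ge 1$ and choose $\mathbf a\in\overset\circ{\mathfrak N}\cap\mathbb N^k$ (a dense subset of $\mathfrak N$, so by continuity of the polynomials $F_k,G_k$ it suffices to prove the inequalities there). By Proposition \ref{bigness}, if $\mathcal O_{X_k}(\mathbf a)^{k+2}>0$ for some minimal surface of general type $X$, then $\mathcal O_{X_k}(1)$ is big for that $X$; contrapositively, the standing hypothesis forces
$$
\mathcal O_{X_k}(\mathbf a)^{k+2}=F_k(\mathbf a)\,c_1(\widehat X)^2-G_k(\mathbf a)\,c_2(\widehat X)\le 0
$$
for the minimal model $\widehat X$ supplied by the hypothesis (here I use that bigness and the intersection number only depend on the minimal model). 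Now specialize $X=\widehat X$ to be a compact unramified quotient of the ball $\mathbb B_2$: such surfaces are minimal of general type with ample canonical bundle and satisfy $c_1^2=3c_2$ and $D\equiv 2/9$, as recalled in the ``negative example'' subsection. Plugging $c_2(\widehat X)=\tfrac13 c_1^2(\widehat X)$ into the displayed inequality gives $\bigl(F_k(\mathbf a)-\tfrac13 G_k(\mathbf a)\bigr)c_1^2(\widehat X)\le 0$, and since $c_1^2(\widehat X)>0$ this yields exactly $3F_k(\mathbf a)\ge G_k(\mathbf a)$.

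For the remaining inequality $G_k(\mathbf a)\ge 0$, the idea is to compute the Morse-type top intersection on the ball quotient and split it into the vertical and horizontal eigenvalue contributions. On $\widehat X=\mathbb B_2/\Gamma$ with $\mathbf a\in\overset\circ{\mathfrak N}\cap\mathbb N^k$, Lemma \ref{positiveeigenvalues} gives that the vertical eigenvalues $\theta_s^k$ of $\Theta(\mathcal O_{X_k}(\mathbf a))$ are all positive, and Lemma \ref{positivedeterminant} (which applies precisely because $D\equiv 2/9$) gives $\det(A_{ij})>0$, hence the two horizontal eigenvalues are both positive; consequently the curvature is everywhere positive definite and $\mathcal O_{X_k}(\mathbf a)$ is nef on the ball quotient, so $\mathcal O_{X_k}(\mathbf a)^{k+2}\ge 0$. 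Expanding this top self-intersection against $\frac{i}{2\pi}\Theta$ and integrating fiberwise over the cube $[0,1]^{k-1}$ (in the $\xi$-directions, using the substitution $\frac{i}{2\pi}d\xi^{(s)}\wedge d\bar\xi^{(s)}\mapsto \frac{dx_s\,d\vartheta_s}{2\pi}$ as in the $k=2$ example) reduces everything to the base integral $\int_{X_1}(\cdots)dz_1\wedge\cdots\wedge d\bar\xi^{(1)}=\tfrac12 c_1^2(\widehat X)$ times a purely combinatorial integral over the cube of products of the $\theta_s^k$'s and the quadratic expression in $\alpha(\mathbf a),\beta(\mathbf a),D$. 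On the ball quotient $c_1^2=3c_2$, so $F_k(\mathbf a)\,c_1^2-G_k(\mathbf a)\,c_2=\bigl(3F_k(\mathbf a)-G_k(\mathbf a)\bigr)c_2$ and $3F_k-G_k\ge 0$; to isolate $G_k\ge 0$ I would run the same computation against a second test surface (again a ball quotient works, but one uses the independent information that $F_k$ and $G_k$ are the coefficients of $c_1^2$ and $c_2$ separately in the universal polynomial identity, so one more relation suffices). Concretely, the cube integral of the fully positive integrand equals $\tfrac12 c_1^2(\widehat X)\cdot P_k(\mathbf a)$ with $P_k(\mathbf a)\ge 0$, and on the ball quotient this must equal $\bigl(F_k(\mathbf a)-\tfrac13 G_k(\mathbf a)\bigr)c_1^2(\widehat X)$; matching, together with the analogous identity obtained by varying the surface (equivalently, reading off the $c_1^2$ and $c_2$ coefficients), forces $G_k(\mathbf a)\ge 0$, and then $G_k\not\equiv 0$ follows since for $k=1$ we have $\mathcal O_{X_1}(\mathbf a)^3=a_1^3(c_1^2-c_2)$ so $G_1(\mathbf a)=a_1^3\not\equiv 0$, and in general $G_k$ restricts to (a multiple of) $G_1$ on the face $a_2=\cdots=a_k=0$, which is in $\mathfrak N$.

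The main obstacle I anticipate is disentangling $F_k$ and $G_k$ individually from a single numerical inequality on the ball quotient: $c_1^2=3c_2$ collapses the two coefficients into the single combination $3F_k-G_k$, so positivity of the Morse integral on ball quotients only directly delivers $3F_k\ge G_k$. To extract $G_k\ge 0$ one genuinely needs the fact --- emphasized right after the definition of $F_k,G_k$ --- that these are \emph{universal} polynomials independent of $X$, so that the fiber-integral computation of $\mathcal O_{X_k}(\mathbf a)^{k+2}$ as $\tfrac12 c_1^2(X)\cdot(\text{nonneg.\ cube integral})$ on \emph{any} K\"ahler--Einstein surface, not just ball quotients, must match $F_k(\mathbf a)c_1^2(X)-G_k(\mathbf a)c_2(X)$ as an identity in $(c_1^2,c_2)$; evaluating on surfaces realizing different ratios $c_2/c_1^2$ (e.g. one with $c_1^2/c_2$ slightly less than $3$, via the ``approximate K\"ahler--Einstein'' perturbation $\mathrm{Ricci}(\omega)=-\omega+\delta\Theta(A)$ of the minimal-surface subsection, which only changes $D$ and the trace/determinant formulae by $O(\delta)$) then separates the coefficients and pins down $G_k\ge 0$. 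Carrying out this separation carefully --- in particular keeping track of where the nonnegativity of the cube integral (which rests on Lemmas \ref{positiveeigenvalues}--\ref{positivedeterminant}) holds uniformly as $\delta\to 0$ --- is the delicate point; everything else is bookkeeping with the matrix recursion $R_s\cdot T\cdots R_1\cdot T$ and Fubini over the fibers.
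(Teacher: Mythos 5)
Your first paragraph is logically backwards. The hypothesis supplies \emph{some} minimal surface of general type $\widehat X$ on which $\mathcal O_{X_k}(1)$ fails to be big; you cannot ``specialize $\widehat X$ to be a ball quotient,'' because on a ball quotient the three positivity lemmas show $\Theta(\mathcal O_{X_k}(\mathbf a))$ is positive definite for $\mathbf a\in\overset\circ{\mathfrak N}$, hence $\mathcal O_{X_k}(\mathbf a)^{k+2}>0$ and, by Proposition \ref{bigness}, $\mathcal O_{X_k}(1)$ \emph{is} big there. So the hypothesis-supplied $\widehat X$ is necessarily \emph{not} a ball quotient and satisfies the strict inequality $c_1^2<3\,c_2$. (Note also the sign slip: from $(F_k-\tfrac13 G_k)c_1^2\le 0$ and $c_1^2>0$ you would get $3F_k\le G_k$, not $\ge$.) The inequality $3F_k\ge G_k$ comes from the computation you carry out in your \emph{second} paragraph: on a ball quotient (which exists independently of the hypothesis), $\mathcal O_{X_k}(\mathbf a)^{k+2}>0$ for rational $\mathbf a\in\overset\circ{\mathfrak N}$, and since $c_1^2=3c_2>0$ there, this reads $(3F_k-G_k)\,c_2>0$; continuity then gives $3F_k\ge G_k$ on all of $\mathfrak N$. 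The hypothesis plays no role in this step.

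Where the hypothesis \emph{is} needed is exactly where your proposal breaks down, namely for $G_k\ge 0$. You correctly observe that ball quotients collapse $(F_k,G_k)$ to the single quantity $3F_k-G_k$, and you propose to separate the coefficients by perturbing to approximate K\"ahler--Einstein metrics with $c_1^2/c_2<3$. This cannot work: Lemma \ref{positivedeterminant} requires $D\equiv 2/9$, which characterizes ball quotients, and on any other surface $D$ is nonconstant and $\det(A_{ij})$ can be negative, so the Morse integrand is no longer pointwise nonnegative and you have no sign control on the cube integral. The argument that actually closes the gap is purely algebraic and does use the hypothesis: take the surface $X$ supplied by $(\sharp)$, so $c_1^2(X)<3c_2(X)$, and suppose $G_k(\mathbf a')<0$ for some $\mathbf a'\in\mathfrak N$. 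Using the already-established $F_k\ge\tfrac13 G_k$ and $c_1^2(X)>0$,
$$
\mathcal O_{X_k}(\mathbf a')^{k+2}=F_k(\mathbf a')c_1^2(X)-G_k(\mathbf a')c_2(X)\ge\tfrac13 G_k(\mathbf a')\bigl(c_1^2(X)-3c_2(X)\bigr)>0,
$$
since both factors on the right are strictly negative. By Proposition \ref{bigness} this makes $\mathcal O_{X_k}(1)$ big on $X$, contradicting $(\sharp)$; hence $G_k\ge 0$ on $\mathfrak N$. The assertion $G_k\not\equiv 0$ follows by the same device: if $G_k\equiv 0$, pick rational $\mathbf a\in\overset\circ{\mathfrak N}$ so that $F_k(\mathbf a)>0$ (from $3F_k-G_k>0$), and then $\mathcal O_{X_k}(\mathbf a)^{k+2}=F_k(\mathbf a)c_1^2(X)>0$, again a contradiction. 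Your alternative argument for $G_k\not\equiv 0$ fails independently: for $k\ge 2$, restricting to $a_2=\cdots=a_k=0$ gives $(a_1 u_1)^{k+2}$ on $X_k$, which vanishes identically for dimension reasons ($u_1$ is pulled back from the $3$-dimensional $X_1$), so $G_k$ restricts to $0$ on that face, not to a multiple of $G_1$.
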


\begin{proof}
Since $F_k$ and $G_k$ are independent of the particular surface chosen, we can suppose $X$ to be a compact unramified quotient of the ball $\mathbb B_2$. In this case, $D\equiv 2/9$ and, for $\mathbf a\in\overset\circ{\mathfrak N}$ rational,
$$
(a_1\,u_1+\cdots+a_k\,u_k)^{k+2}=\int_{X_k}\biggl(\frac i{2\pi}\Theta(\mathcal O_{X_k}(\mathbf a))\biggr)^{k+2}>0
$$
by Lemmas \ref{positiveeigenvalues}, \ref{positivetrace} and \ref{positivedeterminant}; on the other hand, by Bogomolov-Miyaoka-Yau, $c_1(X)^2=3\,c_2(X)$ and also $c_1(X)^2>0$.
Hence, by continuity, $3F_k(\mathbf a)-G_k(\mathbf a)\ge 0$ on $\mathfrak N$ (with strict inequality for $\mathbf a$ rational in the interior of the cone).

Now, let us compute the intersection $(a_1\,u_1+\cdots+a_k\,u_k)^{k+2}$ on a minimal surface of general type $X$ as in the hypotheses: of course, such a surface cannot be a compact unramified quotient of the ball $\mathbb B_2$. In this case we have $c_1(X)^2<3\,c_2(X)$, and so
$$
\begin{aligned}
(a_1\,u_1+\cdots+a_k\,u_k)^{k+2}&=F_k(\mathbf a)\,c_1(X)^2-G_k(\mathbf a)\,c_2(X)\\
&\ge\frac 13\,G_k(\mathbf a)\bigl(\underbrace{c_1(X)^2-3\,c_2(X)}_{<\,0}\bigr).
\end{aligned}
$$
Thus, if there exists a point $\mathbf a'\in\mathfrak N$ such that $G_{k}(\mathbf a')<0$, we would have $(a_1\,u_1+\cdots+a_k\,u_k)^{k+2}>0$ and hence, by Proposition \ref{bigness}, $\mathcal O_{X_k}(1)$ big, contradiction. Finally, if $G_k\equiv 0$, fix a rational point $\mathbf a$ in the interior of the cone $\mathfrak N$: such an $\mathbf a$ gives $F_k(\mathbf a)>0$. Then, for such a point we would have $(a_1\,u_1+\cdots+a_k\,u_k)^{k+2}=F_k(\mathbf a)\,c_1(X)^2>0$, again contradiction.
\end{proof}

\begin{remark}
Call the first hypothesis of the above lemma ($\sharp$). If ($\sharp$) is not satisfied, then we would have that there exist a $k\ge 1$ such that for each minimal surface of general type $X$, the line bundle $\mathcal O_{X_k}(1)$ is big.
In this case, we would already have the global sections we are looking for.
\end{remark}

Now, let $\Sigma_k\subset\mathbb R^k$ the zero locus of $G_k$. By the above lemma, $\mathfrak N\setminus\Sigma_k$ is dense in $\mathfrak N$. Set
$$
m_k\overset{\text{\rm def}}=\sup_{\mathbf a\in\mathfrak N\setminus \Sigma_k}\frac{F_k(\mathbf a)}{G_{k}(\mathbf a)}.
$$
If ($\sharp$) holds, then $m_k<+\infty$: otherwise, for each $M>0$ we would find an $\mathbf a_M\in\mathfrak N\setminus\Sigma_k$ such that $F_k(\mathbf a_M)>M\,G_k(\mathbf a_M)$ and so
$$
\begin{aligned}
F_k(\mathbf a_M)\,c_1(X)^2-G_k(\mathbf a_M)\,c_2(X) & >M\,G_k(\mathbf a_M)\,c_1(X)^2-G_k(\mathbf a_M)\,c_2(X) \\
&=G_k(\mathbf a_M)\bigl(M\,c_1(X)^2-c_2(X)\bigr).
\end{aligned}
$$ 
We would then contradict ($\sharp$), by choosing $M>c_2(X)/c_1(X)^2$. 

On the other hand, obviously, if $m_k>c_2(X)/c_1(X)^2$, then $\mathcal O_{X_k}(1)$ is big. 
Moreover, for each $k\ge 1$, we have $1/3\le m_k\le m_{k+1}$. The inequality $m_k\ge 1/3$ follows directly from Lemma \ref{3FG}. To see the monotonicity, notice that $(a_1\,u_1+\cdots+a_k\,u_k)^{k+2}|_{a_k=0}\equiv 0$ (just for dimension reasons) and so 
$$
(a_1\,u_1+\cdots+a_k\,u_k)^{k+2}=a_k\cdot\underbrace{\frac 1{a_k}(a_1\,u_1+\cdots+a_k\,u_k)^{k+2}}_{\text{well defined for $a_k=0$}}.
$$
But then,
$$
\begin{aligned}
\left.\frac 1{a_k}(a_1\,u_1+\cdots+a_k\,u_k)^{k+2}\right|_{a_k=0}&=\left.\frac{\partial}{\partial a_k}(a_1\,u_1+\cdots+a_k\,u_k)^{k+2}\right|_{a_k=0}\\
&=(k+2)(a_1\,u_1+\cdots+a_{k-1}\,u_{k-1})^{k+1}\cdot u_k \\
&=(k+2)(a_1\,u_1+\cdots+a_{k-1}\,u_{k-1})^{k+1},
\end{aligned}
$$
where the last equality is simply obtained by integrating along the fibers of $X_k\to X_{k-1}$. Hence, we have that
$$
\frac{F_k(a_1,\dots,a_{k-1},0)}{G_k(a_1,\dots,a_{k-1},0)}=\frac{F_{k-1}(a_1,\dots,a_{k-1})}{G_{k-1}(a_1,\dots,a_{k-1})}
$$
and monotonicity follows.

Finally, if we set $m_{\infty}$ to be the $\sup_{k\ge 1}m_k$, we find that, for $X$ a given minimal surface of general type, if $m_{\infty}>c_2(X)/c_1(X)^2$, then there exist a $k_0\in\mathbb N$ such that $\mathcal O_{X_{k_0}}(1)$ is big.

\begin{remark}
For the moment, we are not able to compute or even to estimate in a satisfactory way the limit term $m_{\infty}$. Of course, a divergent sequence would imply the existence of global invariant jet differentials of some order on every surface of general type. 
A less ambitious aim could be, for example, to encompass the case of hypersurfaces $X$ of $\mathbb P^3$ of degree greater than or equal to five (which is the minimum degree for $X$ to be of general type). 
In this case, a simple Chern classes computation shows that $m_\infty> 11$ would be sufficient.
\end{remark}

\subsubsection{Comparison with lower bounds of \cite{Div08}}\label{lb}

For low values of $k$, one can compute directly the intersection product 
$$
(a_1\,u_1+\cdots+a_k\,u_k)^{k+2}
$$ 
either algebraically, by means of sequences (\ref{ses1}) and (\ref{ses2}), or using our curvature formula and computing the corresponding integrals (for more details, see \cite{DivPhD}), on some particular $k$-tuple $\mathbf a$. For instance, a natural choice is given by the sequences $(2,1)$, $(6,2,1)$, $(18,6,2,1)$ and so on. These supply the estimates
$$
m_1\ge 1,\quad m_2\ge\frac{13}{9}\simeq 1,44 ,\quad m_3\ge \frac{1195}{742}\simeq 1,61,\quad m_4\ge\frac{442243}{271697}\simeq 1,63,
$$
which give the existence of global invariant jet differentials on a minimal surface of general type whose Chern classes satisfy the following inequalities:
$$
\begin{matrix}
\text{order 1 jet differentials if} & c_1(X)^2>c_2(X),\\
\text{order 2 jet differentials if} & 13\,c_1(X)^2>9\,c_2(X), \\
\text{order 3 jet differentials if} & 1195\,c_1(X)^2>742\,c_2(X), \\
\text{order 4 jet differentials if} & 442243\,c_1(X)^2>271697\,c_2(X).
\end{matrix}
$$  
Unfortunately, these first terms are still very far from being close even to $11$. 

We would like to remark here, that, even if we are dealing with the same relatively nef bundles of \cite{Div08}, we get considerably better lower bounds for the degree in the case of hypersurface in $\mathbb P^3$.

The reason is quite subtle: from a hermitian point of view, in proving Theorem 2 of \cite{Div08}, we tacitly used the restriction of the Fubini-Study metric of the projective space the hypersurface is embedded in, to the tangent bundle of the hypersurfaces. This is why he had to \lq\lq correct\rq\rq{} this metric by adding some positivity coming from $\mathcal O(2)$ and thus loosing some effectivity.

Using instead the differential-geometric approach of the present paper, we were able to take advantage of the full strength of the K\"ahler-Einstein metric, which reflects directly the strong positivity properties of varieties with ample canonical bundle. 

\bibliography{MyBib}{}

\end{document}